\patchcmd{\section}{\scshape}{\bfseries}{}{}
\renewcommand{\@secnumfont}{\bfseries}
\DeclareMathOperator{\Hom}{Hom}
\DeclareMathOperator{\Spec}{Spec}
\DeclareMathOperator{\Img}{Img}
\DeclareMathOperator{\Pic}{Pic}
\DeclareMathOperator{\Proj}{Proj}
\theoremstyle{plain}
\newtheorem{mydef}{\textbf{Definition}}[section]
\newtheorem{myeg}[mydef]{\textbf{Example}}
\newtheorem{mythm}[mydef]{\textbf{Theorem}}
\newtheorem*{nothm}{\textbf{Theorem}}
\newtheorem{rmk}[mydef]{\textbf{Remark}}
\newtheorem{lem}[mydef]{\textbf{Lemma}}
\newtheorem{pro}[mydef]{\textbf{Proposition}}
\newtheorem{cor}[mydef]{\textbf{Corollary}}
\patchcmd{\abstract}{\scshape\abstractname}{\normalsize{\textbf{\abstractname}}}{}{}
\begin{document}

\title{\v{C}ech cohomology of semiring schemes}


\author{Jaiung Jun}
\address{Department of Mathematics, Johns Hopkins University, Baltimore, MD 21218 USA}
\curraddr{}
\email{jujun@math.jhu.edu}


\subjclass[2010]{14T99(primary), 18G60(secondary)}

\keywords{Characteristic one, Semiring scheme, \v{C}ech cohomology, Picard group.}

\date{}

\dedicatory{}

\begin{abstract}
\normalsize{\noindent A semiring scheme generalizes a scheme in such a way that the underlying algebra is that of semirings. We generalize \v{C}ech cohomology theory and invertible sheaves to semiring schemes. In particular, when $X=\mathbb{P}^n_M$, a projective space over a totally ordered idempotent semifield $M$, we show that $\textnormal{\v{H}}^m(X,\mathcal{O}_X)$ is in agreement with the classical computation for all $m$. Finally, we classify all invertible sheaves on $X=\mathbb{P}^n_M$ by computing $\Pic(X)$ explicitly.}
\end{abstract}

\maketitle

\section{Introduction}
In this paper, our main interest is \v{C}ech cohomology theory of a semiring scheme which is a generalization of a scheme based on commutative semirings. A notion of semiring schemes has been known in relation to $\mathbb{F}_1$-geometry (cf. \cite{semibook}, \cite{oliver1}), however there are very few results on semiring schemes. In particular, sheaves and homological methods on semiring schemes have been never considered.\\
In \cite{giansiracusa2013equations}, J.Giansiracusa and N.Giansiracusa proved that one can associate a semiring scheme $X$ to a tropical variety $Y$ in such a way that $Y$ can be identified with $X(\mathbb{R}_{max})$, the set of `$\mathbb{R}_{max}$-rational points' of $X$, where $\mathbb{R}_{max}=(\mathbb{R}\cup\{-\infty\},\max,+)$ is a tropical semifield with a maximum convention (also, see \cite{giansiracusa2014universal} for the further development in connection to the Berkovich analytification). This opens the door to approach tropical geometry by means of semiring schemes and, to this end, one needs to better comprehend semiring schemes in perspective of both $\mathbb{F}_1$-geometry and tropical geometry. This paper is organized as follows:\\
In $\S 2$, we quickly review basic properties of semiring schemes and then in $\S 3$ we use a tensor product of semimodules defined by B.Pareigis and H.Rohrl in \cite{remarksonsemimodules} to confirm that a construction of Picard groups can be generalized to semiring schemes. Finally, in $\S 4$, we generalize \v{C}ech cohomology theory to semiring schemes by appealing to the framework of A.Patchkoria in \cite{cech}. The basic idea is to replace a coboundary map with a pair of coboundary maps. The following is the main result of the paper.
\begin{nothm}(cf. Propositions \ref{cechglobal}, \ref{cechforPn}, \ref{cechprojectivespace}, Theorem \ref{cech and picard equal}, Corollary \ref{invsheafclass})\\
Let $X$ be a semiring scheme. Then the following hold.
\begin{enumerate}
\item
$\Gamma(X,\mathcal{O}_X) \simeq \textnormal{\v{H}}^0(X,\mathcal{O}_X)$. 
\item
$\Pic(X)\simeq \textnormal{\v{H}}^1(X,\mathcal{O}_X^*)$. 
\item
Let $X$ be a projective space $\mathbb{P}^n_M$ over a totally ordered idempotent semifield $M$. Then, for the standard open covering $\mathcal{U}:=\{D(x_0),...,D(x_n)\}$ of $X$, we have 
\[\textnormal{\v{H}}^0(X,\mathcal{O}_X)\simeq M,\quad \textnormal{\v{H}}^p(X,\mathcal{U},\mathcal{O}_X) =0 \textrm{ }\forall p \geq 1, \quad \Pic(X)\simeq \mathbb{Z}.\]
In particular, any invertible sheaf on $X$ is isomorphic to $\mathcal{O}_X(m)$ for some $m \in \mathbb{Z}$.
\end{enumerate}
\end{nothm}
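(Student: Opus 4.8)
The plan is to establish the three computations for $\mathbb{P}^n_M$ in a way that mirrors the classical case while accounting for the idempotent semiring structure. Working with the standard covering $\mathcal{U}=\{D(x_0),\dots,D(x_n)\}$, the sections $\mathcal{O}_X(D(x_{i_0})\cap\cdots\cap D(x_{i_p}))$ are the degree-zero parts of localizations of the polynomial semiring $M[x_0,\dots,x_n]$ at the monomials $x_{i_0}\cdots x_{i_p}$. The first goal, $\textnormal{\v{H}}^0(X,\mathcal{O}_X)\simeq M$, should follow directly from part (1) of the theorem together with the identification $\Gamma(X,\mathcal{O}_X)\simeq M$, which I would verify by checking that a global section, being a degree-zero element compatible across all the $D(x_i)$, must be a constant in $M$; the totally ordered idempotent hypothesis is what forces the only units among homogeneous fractions to reduce to elements of $M$.

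For the vanishing $\textnormal{\v{H}}^p(X,\mathcal{U},\mathcal{O}_X)=0$ for all $p\ge 1$, I would compute the \v{C}ech complex explicitly on $\mathcal{U}$. In Patchkoria's framework (as set up in $\S 4$), the coboundary is replaced by a \emph{pair} of maps $(\partial^+,\partial^-)$, and cohomology is defined via the equalizer/coequalizer data these furnish rather than by a single kernel-modulo-image. So the key step is to show that the degree-zero \v{C}ech cochains form an exact sequence in this refined sense. I would build a contracting homotopy in the spirit of the classical argument: on each intersection the relevant semimodule of degree-zero monomials is a free idempotent $M$-module on explicit monomial generators, and I would exhibit splitting maps that, against the pair $(\partial^+,\partial^-)$, witness that every compatible cocycle is a coboundary. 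The combinatorics here are exactly those of the classical Koszul-type resolution of $\mathbb{P}^n$, but I must track signs/parities purely additively since there is no subtraction; the homotopy must be given by genuine monomial-supported maps.

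For the Picard computation $\Pic(X)\simeq\mathbb{Z}$, I would invoke part (2), $\Pic(X)\simeq\textnormal{\v{H}}^1(X,\mathcal{O}_X^*)$, and compute the latter directly on $\mathcal{U}$. A \v{C}ech $1$-cocycle for $\mathcal{O}_X^*$ is a family of units $g_{ij}$ on the overlaps $D(x_i)\cap D(x_j)$ satisfying the cocycle condition; since $M$ is a totally ordered idempotent semifield, each such unit is forced to be a monomial $c\,(x_i/x_j)^{m}$ with $c\in M^\times$, and the cocycle condition pins down a single integer invariant $m\in\mathbb{Z}$, with coboundaries killing the $M^\times$-part. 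This yields the isomorphism $\textnormal{\v{H}}^1(X,\mathcal{O}_X^*)\simeq\mathbb{Z}$, and the final assertion that every invertible sheaf is $\mathcal{O}_X(m)$ follows by matching the transition functions of $\mathcal{O}_X(m)$ with this invariant.

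I expect the main obstacle to be the vanishing of the higher \v{C}ech groups, precisely because the absence of additive inverses makes the usual ``kernel modulo image'' homotopy argument unavailable. The crux is to verify that the pair-of-coboundaries formalism genuinely detects exactness of the monomial complex, and to construct homotopy operators that respect the idempotent additive structure rather than relying on cancellation. Once the degree-zero monomial semimodules are resolved combinatorially, parts (1) and (3-Picard) are comparatively formal consequences of the abstract results already established earlier in the paper.
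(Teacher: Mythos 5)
Your outline matches the paper's at the level of strategy (use the $\textnormal{\v{H}}^0=\Gamma$ identification for the first claim, the cocycle correspondence for $\Pic\simeq\textnormal{\v{H}}^1(X,\mathcal{O}_X^*)$, and an explicit computation on the standard cover for $\mathbb{P}^n_M$), but the step you yourself flag as the crux --- the vanishing of $\textnormal{\v{H}}^p(X,\mathcal{U},\mathcal{O}_X)$ for $p\geq 1$ --- is left unresolved, and the mechanism you propose for it would not work. A ``contracting homotopy'' in the Koszul style needs the identity $dh+hd=\mathrm{id}$, which intrinsically requires cancellation; in Patchkoria's formalism there is no way to manufacture the identity map out of the pair $(d^+,d^-)$ by ``tracking parities additively.'' The paper's actual argument is not a homotopy but an \emph{absorption} trick: given a cocycle $t=(t_l)$, write each $t_l=\sum_i t_{l_i}$ as a sum of pieces each of which extends to the intersection $U_{l_i}$ with one index dropped (a tropical partition of unity), assemble $u_J=\sum_{l}\sum_{l_i=J}t_{l_i}$, and --- crucially --- take $v=u$. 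Then every summand of $t_l$ already occurs as a summand of $(d^+(u))_l+(d^-(u))_l$, so idempotency of $M$ gives $t+d^+(u)+d^-(v)=d^+(u)+d^-(v)=d^+(v)+d^-(u)$, which is exactly the congruence $t\,\rho^p\,0$. No exactness of a monomial complex is ever established, and none is needed; the cocycle is swallowed rather than cancelled. This single idea is what is missing from your plan.

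There is a second, smaller gap in the Picard computation. $\textnormal{\v{H}}^1(X,\mathcal{O}_X^*)$ is by definition a colimit over all coverings, so identifying $1$-cocycles on the standard cover $\mathcal{U}$ only computes $\textnormal{\v{H}}^1(\mathcal{U},\mathcal{O}_X^*)$. The paper must justify that this equals $\textnormal{\v{H}}^1(X,\mathcal{O}_X^*)$: it proves (Lemma \ref{affinevanishing}) that for a principal open cover $\{D(f_i)\}$ of an affine chart, the semiring Nullstellensatz $1=\sum h_if_i$ together with the total order and idempotency of $M$ forces some $f_i$ to already be a unit, so that $D(f_i)=X$ and the (unordered, group-valued) \v{C}ech complex contracts; Leray's theorem (Theorem \ref{leray}) then gives $\textnormal{\v{H}}^m(\mathcal{U},\mathcal{O}_X^*)\simeq\textnormal{\v{H}}^m(X,\mathcal{O}_X^*)$. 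Your proposal uses the totally ordered idempotent hypothesis only to pin down the shape of units on overlaps as $c\,(x_i/x_j)^m$, not for this cofinality reduction, which is where the hypothesis does its real work. Once these two points are supplied, the remaining steps you describe (the $\textnormal{\v{H}}^0$ computation, $\Pic\simeq\textnormal{\v{H}}^1(X,\mathcal{O}_X^*)$, the cocycle condition forcing a single integer $m$ with coboundaries killing the $M^\times$-part, and matching the transition functions $x_i^m/x_j^m$ of $\mathcal{O}_X(m)$) do follow the paper's route.
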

\subsection*{Acknowledgment}
This is a part of the author's Ph.D. thesis \cite{jaiungthesis}. The author is grateful to his academic advisor Caterina Consani for introducing this project. The author would like to give special thanks to Jeffrey Giansiracusa for interesting conversations and suggesting further directions. $\S 4$ is reshaped under his guidance. In particular, the tropical partition of unity argument in Lemma \ref{affinevanishing} is his idea. The author also thanks Oliver Lorscheid for sharing his preprint on \v{C}ech cohomology of blue schemes. Finally, the author thanks Paul Lescot for helpful comments regarding Remark \ref{rmk}.

\section{Review: Construction of semiring schemes}\label{constofsemischeme}
Throughout the paper, all semirings are assumed to be commutative. A (multiplicatively) cancellative semiring $M$ is a semiring such that: $\forall x,y,z \in M$, $xy=xz$ implies $y=z$ if $x\neq 0_M$. Note that this is different from $M$ having no (multiplicative) zero-divisor due to the absence of additive inverses. For an idempotent semiring $M$, one has a canonical partial order arising from an addition as follows:
\begin{equation}\label{totalorder}
x \leq y \iff x+y=y, \quad x,y \in M.
\end{equation}
By a totally ordered idempotent semiring we always mean an idempotent semiring such that a canonical partial order \eqref{totalorder} is totally ordered. For an introduction to semiring theory, we refer the reader to \cite{semibook} (also, see Appendix \ref{semirings} for the basic definitions). \\
Recall that for a semiring $M$, by a prime ideal $\mathfrak{p}$ of $M$ we mean an ideal $\mathfrak{p}$ of a semiring $M$ such that if $xy \in \mathfrak{p}$, then $x\in \mathfrak{p}$ or $y \in \mathfrak{p}$. The set $X=\Spec M$ is a topological space equipped with Zariski topology. Then, as in the classical case, we can implement the structure sheaf $\mathcal{O}_X$ of $X$ to obtain a semiring scheme. The following is well known in the theory of semiring schemes (cf. \cite{oliver1}, \cite{toen2009dessous}).

\begin{pro}\label{semiglobalsection}
Let $M$ be a semiring and $X=\Spec M$ be an affine semiring scheme. 
\begin{enumerate}
\item
For a non-zero element $f \in M$, we have $M_f \simeq \mathcal{O}_X(D(f))$. In particular, $M \simeq \mathcal{O}_X(X)$. 
\item
For $\mathfrak{p} \in X=\Spec M$, the stalk $\mathcal{O}_{X,\mathfrak{p}}$ of the sheaf $\mathcal{O}_X$ is isomorphic to the local semiring $M_\mathfrak{p}$.
\item
The opposite category of affine semiring schemes is equivalent to the category of semirings.
\end{enumerate} 
\end{pro}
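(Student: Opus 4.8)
The plan is to follow the template of the classical proof (Hartshorne, \emph{Algebraic Geometry}, II.2.2), adapting each step to the semiring setting and isolating the single point where the classical argument uses additive inverses. I would prove the three statements in order, with (1) carrying the essential content and (2), (3) following formally. For (1), first define the natural homomorphism $M_f \to \mathcal{O}_X(D(f))$ sending a fraction to its associated locally-fractional section, and prove it is an isomorphism. For injectivity, suppose $a/f^n$ and $b/f^m$ have the same germ at every $\mathfrak{p}\in D(f)$. Rather than forming the (nonexistent) difference, I would introduce the \emph{equalizer ideal}
\[
E=\{\, s\in M : s f^{m}a = s f^{n}b \,\},
\]
which is genuinely an ideal, being closed under addition and under multiplication by $M$ using only the semiring operations. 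The hypothesis says exactly that $E\not\subseteq\mathfrak{p}$ for every $\mathfrak{p}\in D(f)$, i.e.\ $V(E)\subseteq V(f)$. Invoking the semiring fact that the radical of an ideal is the intersection of the primes containing it, this forces $f\in\sqrt{E}$, so $f^{N}\in E$ for some $N$, which is precisely the relation $f^{N}f^{m}a=f^{N}f^{n}b$ witnessing $a/f^n=b/f^m$ in $M_f$.

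For surjectivity, given a section $s\in\mathcal{O}_X(D(f))$, I would cover $D(f)$ by basic opens $D(h_i)$ on which $s$ is a fraction; by quasi-compactness of $D(f)$ finitely many $h_1,\dots,h_r$ suffice, and after replacing each $h_i$ by a power I may write $s=a_i/h_i$ on $D(h_i)$. On each overlap $D(h_ih_j)$ the injectivity just established (applied over $M_{h_ih_j}$) yields, after again absorbing powers, the honest equation $h_j a_i = h_i a_j$. Since $D(f)\subseteq\bigcup_i D(h_i)$ gives $f^{M}=\sum_i b_i h_i$ for some $b_i\in M$, I would set $a=\sum_i b_i a_i$ and verify $h_j a = a_j f^{M}$ by the one-line computation $h_j a=\sum_i b_i h_j a_i=\sum_i b_i h_i a_j=a_j f^{M}$; this patching uses \emph{only} addition and multiplication, so it transfers verbatim. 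Thus $a/f^{M}$ restricts to $s$ on every $D(h_j)$ and hence equals $s$.

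Statement (2) then follows formally: the stalk is the directed colimit $\mathcal{O}_{X,\mathfrak{p}}=\varinjlim_{f\notin\mathfrak{p}}\mathcal{O}_X(D(f))\cong\varinjlim_{f\notin\mathfrak{p}}M_f$, and since localization of semirings commutes with this directed colimit the right-hand side is $M_\mathfrak{p}$. For (3) I would exhibit the adjoint pair $M\mapsto\Spec M$ and $X\mapsto\Gamma(X,\mathcal{O}_X)$; using (1) to identify $\Gamma(\Spec M,\mathcal{O})\cong M$, a semiring homomorphism $\varphi\colon M\to N$ produces a morphism $\Spec N\to\Spec M$ via $\mathfrak{q}\mapsto\varphi^{-1}(\mathfrak{q})$, where preimages of prime ideals are prime for semiring maps, together with the evident map on structure sheaves, and conversely every morphism is recovered from its effect on global sections, giving the desired equivalence.

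I expect the main obstacle to be precisely the injectivity in (1): this is the unique place where the classical proof subtracts, and the whole adaptation hinges on replacing that step by the equalizer-ideal argument together with the identity $\sqrt{E}=\bigcap_{\mathfrak{p}\supseteq E}\mathfrak{p}$. Once that identity is available for semirings, every remaining manipulation is free of additive inverses and the classical bookkeeping carries over with only cosmetic changes.
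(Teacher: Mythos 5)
Your proposal is correct, and it necessarily differs from the paper, because the paper gives no proof at all: Proposition \ref{semiglobalsection} sits in a review section and is recorded as well known, with the argument deferred to the cited literature (Lorscheid's work on blueprints and To\"{e}n--Vaqui\'{e}'s relative algebraic geometry, where the statement is obtained either by direct adaptation of the classical proof or within a general monoidal formalism). Your direct Hartshorne-style route is sound, and you correctly isolate the single point where the classical argument subtracts: the injectivity of $M_f\to\mathcal{O}_X(D(f))$. The equalizer set $E=\{s\in M: sf^ma=sf^nb\}$ is indeed an ideal using only distributivity, the hypothesis on germs translates exactly to $V(E)\subseteq V(f)$, and the identity $\sqrt{I}=\bigcap_{\mathfrak{p}\supseteq I}\mathfrak{p}$ is valid for commutative semirings because Krull's Zorn-lemma argument (an ideal maximal among those missing the multiplicative set $\{f^k\}$ is prime) is subtraction-free; this is in Golan's book \cite{semibook}, which the paper already leans on. Two small observations: first, the same radical identity is silently used a second time in your surjectivity step, to extract $f^{M}=\sum_i b_ih_i$ from $D(f)\subseteq\bigcup_i D(h_i)$ --- this is precisely the ``Hilbert Nullstellensatz of semirings'' that the paper later invokes in Lemma \ref{affinevanishing} and Corollary \ref{quasicoheret}, so it deserves to be flagged alongside the injectivity step rather than presented as classical bookkeeping; second, for part (3) the equivalence requires morphisms of \emph{locally} semiringed spaces (so that the induced stalk maps are local and the map on points is forced to be $\mathfrak{q}\mapsto\varphi^{-1}(\mathfrak{q})$), a hypothesis you use implicitly when recovering a morphism from its effect on global sections. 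Neither point is a gap; both transfer verbatim once the radical identity is in hand, exactly as you predict.
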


\begin{rmk}\label{rmk}
In the papers, \cite{les1},\cite{les2},\cite{les3}, P.Lescot considered a topological space of prime congruences instead of prime ideals. Let $M$ be a semiring. A congruence on $M$ is an equivalence relation preserving operations of $M$. More precisely, if $x \sim y$ and $a \sim b$, then $xa \sim yb$ and $x+a \sim y+b$ $\forall x,y,a,b \in M$. A prime congruence is a congruence $\sim$ which satisfies the following condition: if $xy\sim 0$, then $x \sim 0$ or $y \sim 0$. In the theory of commutative rings, there is a one to one correspondence between congruences on a commutative ring $A$ and ideals of $A$. However, such a correspondence no longer holds for semirings. In general, one only obtains an ideal from a congruence as follows: 
\begin{equation}\label{idealfromcongruenc}
I_{\sim}:=\{a \in M \mid a \sim 0\}.
\end{equation}
The main advantage of a congruence over an ideal is that in the theory of semirings a quotient by an ideal does not behave well, however, a quotient by a congruence behaves well.\\
Similar to the construction of a prime spectrum $\Spec M$, one can define the set $X$ of prime congruences and impose Zariski topology on $X$. Each ideal $I_{\sim}$ arises from a congruence $\sim$ as in \eqref{idealfromcongruenc} is called a saturated ideal. In his papers, Lescot had not considered a structure sheaf on the topological space $X$. However, one can mimic the construction of a structure sheaf on semiring schemes by using saturated prime ideals to construct a structure sheaf for a topological space of congruence spectra. This might give a notion of a congruence semiring scheme $(X,\mathcal{O}_X)$. It appears, however, that a semiring $\mathcal{O}_X(X)$ of global sections of an `affine congruence semiring scheme $(X,\mathcal{O}_X)$' might not be isomorphic to a semiring $M$ since a naive generalization of Hilbert's Nullstellensatz (which is the main ingredient in the proof of the classical case) does not hold in the case of congruences. If every ideal of a semiring $M$ is saturated, then an affine semiring scheme induced from $M$ and an affine congruence semiring scheme induced from $M$ are isomorphic as locally semiringed spaces. For example, this is the case when $M$ is a commutative ring. Finally, we remark that a different version of tropical Nullstellensatz has been studied by D.Jo\'{o} and K.Mincheva in \cite{joo2014prime}.
\end{rmk}

\section{ Picard group of a semiring scheme}
\noindent For a given semiring scheme $X$, one defines a sheaf of $\mathcal{O}_X$-semimodules to be a sheaf $\mathcal{F}$ of sets on $X$ such that $\mathcal{F}(U)$ is an $\mathcal{O}_X(U)$-semimodule, and restriction maps $\mathcal{F}(U) \longrightarrow \mathcal{F}(V)$ and $\mathcal{O}_X(U) \longrightarrow \mathcal{O}_X(V)$ are compatible for open sets $V \subseteq U$ of $X$. A morphism of sheaves of $\mathcal{O}_X$-semimodules is also defined as in the classical case. In particular, we call a sheaf $\mathcal{L}$ of $\mathcal{O}_X$-semimodules invertible if $\mathcal{L}$ is locally isomorphic to $\mathcal{O}_X$.  

\begin{myeg}
Clearly, a structure sheaf $\mathcal{O}_X$ is a sheaf of $\mathcal{O}_X$-semimodules.
Furthermore, let $\mathcal{F},\mathcal{G}$ be sheaves of $\mathcal{O}_X$-semimodules. Then, as in the classical case, the sheaf $\Hom(\mathcal{F},\mathcal{G})$ becomes a sheaf of $\mathcal{O}_X$-semimodules.
\end{myeg}

\begin{myeg}\label{twistingsheaf}
Let $X=\mathbb{P}^n_M$ be an $n$-dimensional projective space over a semifield $M$; one may consider $X$ as $\Proj M[x_0,...,x_n]$ or equivalently, as the semiring scheme with $(n+1)$ open affine charts $U_i:=\Spec M[\frac{x_0}{x_i},...\frac{x_i}{x_i},...,\frac{x_n}{x_i}]$ for $i=0,...,n$ as in the classical case. Let $S=M[x_0,...,x_n]$ and $S(m)$ be the degree $m$-part of $S$ (which is a graded $S$-semimodule). Then, one can easily generalize the classical construction of $\mathcal{O}_X(m):=\widetilde{S(m)}$ and check from the similar argument (for example, see \cite[Proposition 5.12, \S 2.5]{Har} ) that $\mathcal{O}_X(m)$ is an invertible sheaf on $X$.
\end{myeg}

\noindent Next, we construct the tensor product $\mathcal{F} \otimes_{\mathcal{O}_X} \mathcal{G}$ of sheaves of $\mathcal{O}_X$-semimodules. Note that when we define a tensor product of semimodules, we need to be careful. There are several ways one can generalize the classical construction of a tensor product to semimodules, and some generalizations might not work well. For example, the generalization as in the Golan's book \cite{semibook} is not a proper generalization. In fact, if we follow the generalization of a tensor product in \cite{semibook}, for a semiring $A$ and an $A$-semimodule $M$, we have 
\begin{equation}\label{tensorgolan}
A \otimes_A M \simeq (M/\sim),
\end{equation}
where $\sim$ is a congruence relation on $M$ such that $a \sim b$ if and only if $\exists$ $c \in M$ such that $a+c=b+c$. When $A$ is an idempotent semiring (in which our main interest lies), the tensor product of \cite{semibook} badly behaves. For example, we have $\mathbb{Z}_{max} \otimes_{\mathbb{Z}_{max}} \mathbb{R}_{max} \simeq \{0\}$. Furthermore, we have 
\[\{0\}=\Hom(\mathbb{Z}_{max} \otimes_{\mathbb{Z}_{max}}\mathbb{Z}_{max},\mathbb{Z}_{max}) \neq \Hom(\mathbb{Z}_{max},\Hom(\mathbb{Z}_{max},\mathbb{Z}_{max}))=\mathbb{Z}_{max}. \]
This implies that we can not have the Hom-Tensor duality at the level of sheaves of $\mathcal{O}_X$-semimodules with the Golan's notion. Therefore, one can not generalize directly the construction of Picard groups. To this end, we use the definition of a tensor product which is proposed in \cite{remarksonsemimodules}. Then we recover usual isomorphisms which one can expect from a tensor product. In particular, we have $R\otimes_R M \simeq M\otimes_RR \simeq M$ and $\Hom(M\otimes_RN,P)\simeq \Hom(M,\Hom(N,P))$ for a semiring $R$ and $R$-semimodules, $M,N,P$. By appealing to such results, we define the Picard group $\Pic(X)$ of a semiring scheme $X$. 
\begin{mydef}
Let $X$ be a semiring scheme and $\mathcal{F},\mathcal{G}$ be sheaves of $\mathcal{O}_X$-semimodules. We define $\mathcal{F}\otimes_{\mathcal{O}_X}\mathcal{G}$ to be the sheafification of the presheaf $\mathcal{H}$, where $\mathcal{H}(U)=\mathcal{F}(U)\otimes_{\mathcal{O}_X(U)}\mathcal{G}(U)$ for each open set $U$ of $X$ and the tensor product is as in \cite{remarksonsemimodules}.
\end{mydef}
\begin{rmk}
One can easily observe that $\mathcal{F}\otimes_{\mathcal{O}_X}\mathcal{G}$ is indeed a sheaf of $\mathcal{O}_X$-semimodules.
\end{rmk}

The following are statements which can be directly generalized from the classical statements (mainly due to the fact that the existence of additive inverses is not used in the classical proofs).
\begin{lem}\label{stalklem}
Let $X$ be a semiring scheme. Let $\mathcal{F},\mathcal{G}$ be sheaves of $\mathcal{O}_X$-semimodules and $\mathcal{L}$ be an invertible sheaf of $\mathcal{O}_X$-semimodules on $X$.
\begin{enumerate}
\item
\[\label{tensorstalk}
(\mathcal{F} \otimes_{\mathcal{O}_X} \mathcal{G})_\mathfrak{p} \simeq \mathcal{F}_\mathfrak{p} \otimes_{\mathcal{O}_{X,\mathfrak{p}}} \mathcal{G}_\mathfrak{p}, \quad \forall \mathfrak{p} \in X
\]
\item
\[\label{homtensor}
\Hom_{\mathcal{O}_X}(\mathcal{L},\mathcal{O}_X) \otimes_{\mathcal{O}_X} \mathcal{L} \simeq \Hom_{\mathcal{O}_X}(\mathcal{L},\mathcal{L}).
\]
\item
\[
\Hom_{\mathcal{O}_X}(\mathcal{O}_X,\mathcal{O}_X ) \simeq \mathcal{O}_X.
\]
\item
\[
\Hom_{\mathcal{O}_X}(\mathcal{L},\mathcal{L}) \simeq \mathcal{O}_X.
\]
\item
The sheaf $\Hom_{\mathcal{O}_X}(\mathcal{L},\mathcal{O}_X)$ is also an invertible sheaf of $\mathcal{O}_X$-semimodules. Furthermore, we have the following isomorphism:
\begin{equation}\Hom_{\mathcal{O}_X}(\mathcal{L},\mathcal{O}_X) \otimes_{\mathcal{O}_X} \mathcal{L} \simeq \mathcal{O}_X.
\end{equation}
\end{enumerate}
\end{lem}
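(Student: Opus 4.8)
The plan is to follow the classical arguments essentially verbatim, the point being that none of them invokes additive inverses; the only genuine content is to confirm that the two formal properties of the Pareigis--Rohrl tensor product recorded above---namely $R\otimes_R M\simeq M$ and the Hom--Tensor adjunction $\Hom(M\otimes_R N,P)\simeq\Hom(M,\Hom(N,P))$---survive at the level of sheaves, together with the fact that this tensor product commutes with the filtered colimits computing stalks. I would prove the statements in the order $(1),(3),(4),(2),(5)$, since $(3)$ feeds $(4)$, and $(2)$ together with $(4)$ immediately yields $(5)$.

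For $(1)$, recall that the stalk functor commutes with sheafification, so it suffices to compute the stalk of the presheaf $\mathcal{H}$ with $\mathcal{H}(U)=\mathcal{F}(U)\otimes_{\mathcal{O}_X(U)}\mathcal{G}(U)$. Writing the stalk as a filtered colimit over open neighborhoods $U\ni\mathfrak{p}$ and using that the tensor product commutes with such colimits, one obtains
\[
\mathcal{H}_{\mathfrak{p}}=\varinjlim_{U\ni\mathfrak{p}}\big(\mathcal{F}(U)\otimes_{\mathcal{O}_X(U)}\mathcal{G}(U)\big)\simeq\Big(\varinjlim_{U\ni\mathfrak{p}}\mathcal{F}(U)\Big)\otimes_{\mathcal{O}_{X,\mathfrak{p}}}\Big(\varinjlim_{U\ni\mathfrak{p}}\mathcal{G}(U)\Big)=\mathcal{F}_{\mathfrak{p}}\otimes_{\mathcal{O}_{X,\mathfrak{p}}}\mathcal{G}_{\mathfrak{p}},
\]
which is $(1)$. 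For $(3)$, for any semiring $R$ an $R$-semimodule homomorphism $R\to R$ is determined by the image of $1$, giving $\Hom_R(R,R)\simeq R$; applying this on each open set and passing to the associated sheaf gives $\Hom_{\mathcal{O}_X}(\mathcal{O}_X,\mathcal{O}_X)\simeq\mathcal{O}_X$.

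For $(4)$, I would exhibit the globally defined morphism $\mathcal{O}_X\to\Hom_{\mathcal{O}_X}(\mathcal{L},\mathcal{L})$ carrying a local section $a$ to multiplication by $a$. On an open cover $\{U_i\}$ trivializing $\mathcal{L}$ one has $\Hom_{\mathcal{O}_X}(\mathcal{L},\mathcal{L})|_{U_i}\simeq\Hom_{\mathcal{O}_{U_i}}(\mathcal{O}_{U_i},\mathcal{O}_{U_i})\simeq\mathcal{O}_{U_i}$ by $(3)$, and under these identifications the morphism becomes the identity; being a local isomorphism, it is an isomorphism. Similarly, for $(2)$ the evaluation morphism $\Hom_{\mathcal{O}_X}(\mathcal{L},\mathcal{O}_X)\otimes_{\mathcal{O}_X}\mathcal{L}\to\Hom_{\mathcal{O}_X}(\mathcal{L},\mathcal{L})$, $\phi\otimes s\mapsto(t\mapsto\phi(t)s)$, restricts over each $U_i$ to the canonical isomorphism $\mathcal{O}_{U_i}\otimes_{\mathcal{O}_{U_i}}\mathcal{O}_{U_i}\simeq\mathcal{O}_{U_i}$, hence is a global isomorphism. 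Finally $(5)$ follows formally: local triviality of $\mathcal{L}$ together with $(3)$ shows $\Hom_{\mathcal{O}_X}(\mathcal{L},\mathcal{O}_X)$ is locally isomorphic to $\mathcal{O}_X$, so it is invertible, and chaining $(2)$ with $(4)$ gives $\Hom_{\mathcal{O}_X}(\mathcal{L},\mathcal{O}_X)\otimes_{\mathcal{O}_X}\mathcal{L}\simeq\Hom_{\mathcal{O}_X}(\mathcal{L},\mathcal{L})\simeq\mathcal{O}_X$.

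The step requiring the most care is $(1)$. Over a ring the interchange of tensor product with the filtered colimit defining a stalk is automatic, because $-\otimes N$ is a left adjoint and hence preserves colimits; the recorded Hom--Tensor adjunction suggests the same for the Pareigis--Rohrl tensor product, but two points must be checked: that $-\otimes_R N$ preserves filtered colimits for a \emph{fixed} base semiring $R$ (which does follow from the adjunction), and that this is compatible with the simultaneous change of base semiring $\mathcal{O}_X(U)\to\mathcal{O}_{X,\mathfrak{p}}$ occurring in the colimit. I would verify the latter directly from the construction of the tensor product in \cite{remarksonsemimodules}, since the absence of additive inverses forbids the usual short-exact-sequence manipulations. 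Once $(1)$ is in hand, the remaining parts are local computations of the type above and present no serious difficulty.
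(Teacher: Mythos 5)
Your proof is correct and takes essentially the same approach as the paper: the paper offers no argument for this lemma beyond the preceding remark that the classical proofs carry over verbatim because they never use additive inverses, and your write-up is exactly that classical argument, with the one genuinely non-formal point (commuting the Pareigis--Rohrl tensor product with the filtered colimit computing stalks, including the simultaneous change of base semiring) correctly isolated and flagged for verification against \cite{remarksonsemimodules}. Nothing needs to change.
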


\begin{rmk}
It follows from Lemma \ref{stalklem} that the set $\Pic(X)$ of isomorphism classes of invertible sheaves (of $\mathcal{O}_X$-semimodules) on a semiring scheme $X$ is indeed a group with a group operation $\otimes_{\mathcal{O}_X}$ as in the classical case. In other words, in a monoid of sheaves (with a binary operation given by a tensor product) of $\mathcal{O}_X$-semimodules, the group of invertible elements are indeed sheaves which are locally isomorphic to $\mathcal{O}_X$. This justifies our term of an invertible sheaf on a semiring scheme. In the next section, we will construct \v{C}ech cohomology theory for a semiring scheme $X$, and derive the following classical result: 
\[\Pic(X) \simeq \textnormal{\v{H}}^1(X,\mathcal{O}_X^*).\]  
\end{rmk}


\section{\v{C}ech cohomology}
In \cite{cech}, A.Patchkoria generalized the notion of a chain complex of modules to semimodules by realizing that an alternating sum can be written as the sum of two sums in such a way that one stands for a positive sum and the other a negative sum. In this section, we use this idea to define \v{C}ech cohomology with values in sheaves of semimodules. Then we compute the simple case of a projective space $\mathbb{P}^n_M$ over a totally ordered idempotent semifield $M$. 

\begin{rmk}
One might be also interested in developing the sheaf cohomology for semiring schemes via derived functors. In \cite{jaiungthesis}, we proved that an idempotent semimodule (as well as a sheaf of idempotent semimodules on a semiring scheme) has a (properly defined) injective resolution. However, different from the classical case, the global section functor is not left exact. Moreover, it is unclear whether any two injective resolutions are homotopic (in a suitable sense) or not. There is some evidence that the derived functors approach to the sheaf cohomology might not be a good direction to pursue. More precisely, in \cite{oliver3}, Lorscheid computed the sheaf cohomology of the projective line $\mathbb{P}^1_{\mathbb{F}_1}$ over $\mathbb{F}_1$ via an injective resolution and found that the computation is not in accordance with the classical result. For example, $\mathrm{H}^1(\mathbb{P}^1_{\mathbb{F}_1},\mathcal{O}_{\mathbb{P}^1_{\mathbb{F}_1}})$ is an infinite-dimensional $\mathbb{F}_1$-vector space whereas classically, we have $\mathrm{H}^1(\mathbb{P}^1,\mathcal{O}_{\mathbb{P}^1})=0$. Although this is the case of a monoid scheme, this suggests that one might have to look for other possible approaches.\\ 
\end{rmk}

\begin{mydef}(cf. \cite[Definition 1.10]{cech})\label{cechcoho}
\begin{enumerate}
\item
Let $R$ be a semiring. A cochain complex (of $R$-semimodules) $X=\{X^n,\partial_n^+,\partial_n^-\}_{n\in\mathbb{Z}}$ consists of $R$-semimodules $X^n$ and $R$-homomorphisms $\partial_n^+,\partial_n^-$ as follows:
\[X:\xymatrixcolsep{3pc}\xymatrix{
\cdots \ar[r]_-{\partial_{n-2}^-} \ar@<1ex>[r]^-{\partial_{n-2}^+}
& X^{n-1} \ar[r]_-{\partial_{n-1}^-} \ar@<1ex>[r]^-{\partial_{n-1}^+} 
& X^n \ar[r]_-{\partial_{n}^-} \ar@<1ex>[r]^-{\partial_{n}^+}
& X^{n+1}\ar[r]_-{\partial_{n+1}^-} \ar@<1ex>[r]^-{\partial_{n+1}^+}
&\cdots }, \quad n \in \mathbb{Z},
\]
which satisfies the following condition:
\begin{equation}\label{alexcochain}
\partial^+_{n+1}\circ \partial^+_{n}+\partial^-_{n+1}\circ \partial^-_{n}=\partial^-_{n+1}\circ \partial^+_{n}+\partial^+_{n+1}\circ \partial^-_{n}, \quad n \in \mathbb{Z}.
\end{equation}
\item
For a cochain complex $X$, one defines the following $R$-semimodule: 
\[Z^n(X):=\{x \in X^n\mid\partial^+_{n}(x)=\partial^-_{n}(x)\}\]
as $n$-cocycles, and the $n$-th cohomology as an $R$-semimodule
\[H^n(X):=Z^n(X)/\rho^n,\]
where $\rho^n$ is a congruence relation on $Z^n(X)$ such that $x\rho^n y$ if and only if 
\begin{equation}\label{cohomologycongruence}
x+\partial^+_{n-1}(u)+\partial^-_{n-1}(v)=y+\partial^+_{n-1}(v)+\partial^-_{n-1}(u) \textrm{ for some } u,v \in X^{n-1}.
\end{equation}
\end{enumerate}
\end{mydef}
Suppose that $X=\{X^n,d_n^+,d_n^-\}$ and $Y=\{Y^n,\partial_n^+,\partial_n^-\}$ are cochain complexes of semimodules. Then, by a $\pm$-morphism from $X$ to $Y$ one means a collection $f=\{f^n\}$ of homomorphisms of semimodules which satisfies the following condition:
\begin{equation}\label{pmhomcondiotion}
f^{n+1}\circ d_n^+=\partial_n^+\circ f^n,\quad f^{n+1}\circ d_n^-=\partial_n^-\circ f^n.
\end{equation}
In \cite{cech}, it is proven that a $\pm$-morphism $f=\{f^n\}$ from $X=\{X^n,d_n^+,d_n^-\}$ to $Y=\{Y^n,\partial_n^+,\partial_n^-\}$ induces a canonical homomorphism $H^n(f)$ of cohomology semimodules as follows:
\begin{equation}\label{cohomologyinduced}
H^n(f):H^n(X) \longrightarrow H^n(Y), \quad [x] \mapsto [f^n(x)], \quad n \in \mathbb{Z},
\end{equation}
where $[x]$ is the equivalence class of $x \in Z^n(X)$ in $H^n(X)$.

\begin{rmk}\label{abgroupcech}
As pointed out in \cite{cech}, a sequence $G=\{G^n,d^+_n,d^-_n\}$ of modules is a cochain complex in the sense of Definition \ref{cechcoho} if and only if $G'=\{G^n,\partial^n:=d^+_n-d^-_n\}$ is a cochain complex of modules in the classical sense. Clearly, in this case, the cohomology semimodules of $G$ as in Definition \ref{cechcoho} is the cohomology modules of $G'$ in the classical sense.
\end{rmk}

\begin{rmk}
Since differential maps of many (co)homology theories are defined by alternating sums (simplicial methods), it seems that many of those theories can be directly generalized by using the above framework. For example, if $k$ is a semifield, then Hochschild homology can be computed via the above framework and the result is same as classical case, i.e. $HH_0(k)=k$ and $HH_n(k)=0$ for all $n>0$. 
\end{rmk}

By means of Definition \ref{cechcoho}, we introduce \v{C}ech cohomology with values in sheaves of semimodules which generalizes the classical construction. Let $R$ be a semiring, $X$ be a topological space, and $\mathcal{F}$ be a sheaf of $R$-semimodules on $X$. Suppose that $\mathcal{U}=\{U_i\}_{i\in I}$ is an open covering of $X$, where $I$ is a totally ordered set. Let $U_{i_0i_1\cdots i_p}:=U_{i_0}\cap ...\cap U_{i_p}$. We define the following set: 
\begin{equation}\label{cechcycle}
C^n=C^n(\mathcal{U},\mathcal{F}):=\prod_{i_0<...<i_n}\mathcal{F}(U_{i_0i_1\cdots i_n}), \quad n \in \mathbb{N}.
\end{equation}
Let $x_{i_0\cdots i_n}$ be the coordinate of $x \in C^n$ in $\mathcal{F}(U_{i_0i_1\cdots i_n})$. The differentials are given as follows: 
\begin{equation}\label{+map}
(d_n^+(x))_{i_0i_1\cdots i_{n+1}}=\sum_{k=0,k=\textrm{even}}^{n+1} x_{i_0\cdots \hat{i_k}\cdots i_{n+1}}|_{U_{i_0i_1\cdots i_{n+1}}},
\end{equation}
\begin{equation}\label{-map}
(d_n^-(x))_{i_0i_1\cdots i_{n+1}}=\sum_{k=0,k=\textrm{odd}}^{n+1} x_{i_0\cdots \hat{i_k}\cdots i_{n+1}}|_{U_{i_0i_1\cdots i_{n+1}}},
\end{equation}
where the notation $\hat{i_k}$ means that we omit that index.
One can directly use the classical computation to show that $C=\{C^n,d^+_n,d^-_n\}$ is a cochain complex in the sense of Definition \ref{cechcoho}. We denote the $n$-th cohomology semimodule (with respect to an open covering $\mathcal{U}$) of $C$ by \v{H}$^n(X,\mathcal{U},\mathcal{F})$ and denote by \v{H}$^n(\mathcal{U},\mathcal{F})$ when there is no possible confusion of $X$.

\begin{pro}\label{cechglobal}
Let $R$ be a semiring, $X$ be a topological space, and $\mathcal{F}$ be a sheaf of $R$-semimodules on $X$. Let $\mathcal{U}$ be an open covering of $X$. Then we have
\[\textnormal{\v{H}}^0(\mathcal{U},\mathcal{F})=\mathcal{F}(X).\]
\end{pro}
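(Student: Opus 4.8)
The plan is to compute $\textnormal{\v{H}}^0(\mathcal{U},\mathcal{F})$ directly from Definition \ref{cechcoho} and show it coincides with $\mathcal{F}(X)$ via the sheaf axioms, exactly mirroring the classical argument but checking that additive inverses are nowhere needed. First I would unwind the definition: $\textnormal{\v{H}}^0(\mathcal{U},\mathcal{F}) = Z^0(C)/\rho^0$, where $Z^0(C) = \{x \in C^0 \mid d_0^+(x) = d_0^-(x)\}$ and $C^0 = \prod_{i} \mathcal{F}(U_i)$. Since there is no $C^{-1}$ (we take $C^n$ for $n \in \mathbb{N}$), the congruence $\rho^0$ is trivial, so $\textnormal{\v{H}}^0(\mathcal{U},\mathcal{F}) = Z^0(C)$ and I only need to identify the cocycles with $\mathcal{F}(X)$.

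Next I would write out the cocycle condition explicitly. For $x = (x_i)_i \in C^0$, the differential lands in $C^1 = \prod_{i<j}\mathcal{F}(U_{ij})$, and for each pair $i<j$ the formulas \eqref{+map} and \eqref{-map} give $(d_0^+(x))_{ij} = x_i|_{U_{ij}}$ (the $k=0$, even term) and $(d_0^-(x))_{ij} = x_j|_{U_{ij}}$ (the $k=1$, odd term). Hence $x \in Z^0(C)$ if and only if $x_i|_{U_{ij}} = x_j|_{U_{ij}}$ for all $i < j$; that is, the local sections $x_i$ agree on all pairwise overlaps. This is precisely the compatibility hypothesis in the sheaf gluing axiom.

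Then I would invoke the sheaf axioms for $\mathcal{F}$ to produce the isomorphism. Define a map $\mathcal{F}(X) \to Z^0(C)$ by $s \mapsto (s|_{U_i})_i$; this lands in $Z^0(C)$ since restrictions automatically agree on overlaps, and it is an $R$-semimodule homomorphism because restriction maps are homomorphisms. Conversely, given a cocycle $(x_i)_i$, the agreement on overlaps together with the gluing axiom yields a unique $s \in \mathcal{F}(X)$ with $s|_{U_i} = x_i$, giving the inverse map; the separation (identity) axiom guarantees uniqueness, hence injectivity of $s \mapsto (s|_{U_i})_i$. I should note that the gluing and separation axioms are stated for sheaves of sets and are transported verbatim to sheaves of semimodules, so no ring-theoretic input enters.

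The only point deserving care — and the main thing to verify rather than a genuine obstacle — is that the two maps are mutually inverse $R$-semimodule homomorphisms, which follows immediately once the two sheaf axioms are applied; since the entire argument uses only restriction maps and the universal property of gluing, and never subtracts sections, it carries over unchanged from the classical case. I would conclude that $\textnormal{\v{H}}^0(\mathcal{U},\mathcal{F}) \simeq \mathcal{F}(X)$.
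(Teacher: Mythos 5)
Your proposal is correct and follows essentially the same route as the paper: trivialize $\rho^0$ via $C^{-1}=0$, identify $Z^0$ with the compatible families via the explicit form of $d_0^{\pm}$, and use the sheaf axioms to build mutually inverse homomorphisms with $\mathcal{F}(X)$. (Minor slip: by \eqref{+map} the $k=0$ term omits $i_0$, so $(d_0^+(x))_{ij}=x_j|_{U_{ij}}$ and $(d_0^-(x))_{ij}=x_i|_{U_{ij}}$, the reverse of what you wrote — harmless here since the cocycle condition $d_0^+(x)=d_0^-(x)$ is symmetric.)
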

\begin{proof}
By the definition, we have \v{H}$^0(\mathcal{U},\mathcal{F}):=Z^0(\mathcal{U},\mathcal{F})/\rho^0$. Moreover, $x \rho^0 y \Longleftrightarrow x+d_{-1}^+(u)+d_{-1}^-(v)=y+d^+_{-1}(v)+d_{-1}^-(u)$ for some $u,v \in C^{-1}$. Since $C^{-1}:=0$, we have $x \rho^0 y  \iff x=y$. It follows that \v{H}$^0(\mathcal{U},\mathcal{F})=Z^0(\mathcal{U},\mathcal{F})$. Consider the following:
\[\xymatrixcolsep{3pc}\xymatrix{
&C^0=\prod_{i\in I}\mathcal{F}(U_i) \ar[r]_-{d_{0}^-} \ar@<1ex>[r]^-{d_{0}^+} 
& C^1=\prod_{i<j \in I} \mathcal{F}(U_{ij})
},
\]
where $d_0^+$ is the product of maps $\mathcal{F}(U_j) \longrightarrow \mathcal{F}(U_{ij})$ induced by the inclusion $U_{ij} \longrightarrow U_j$ and $d_0^-$ is the product of maps $\mathcal{F}(U_i) \longrightarrow \mathcal{F}(U_{ij})$ induced by the inclusion $U_{ij} \longrightarrow U_i$. Clearly, we have $Z^0(\mathcal{U},\mathcal{F})\subseteq C^0$. It follows from the inclusion $U_i \hookrightarrow X$ that we have a homomorphism $r_i:\mathcal{F}(X) \longrightarrow \mathcal{F}(U_i)$, hence the following homomorphism: 
\[r=(r_i):\mathcal{F}(X)\longrightarrow C^0.\] 
Since $\mathcal{F}$ is a sheaf, we have $\Img(r) \subseteq Z^0(\mathcal{U},\mathcal{F})$. Conversely, suppose that \[y=(y_i) \in Z^0(\mathcal{U},\mathcal{F})=\{y \in C^0=\prod_{i\in I}\mathcal{F}(U_i)\mid d_0^+(y)=d_0^-(y)\}.\] 
Then we have $y_i|_{U_{ij}}=y_j|_{U_{ij}}$. It follows that there exists a unique global section $y_X \in \mathcal{F}(X)$ such that $(y_X)|_{U_i}=y_i$. Consider the following map:
\[s:Z^0(\mathcal{U},\mathcal{F})\longrightarrow \mathcal{F}(X), \quad y \mapsto y_X.\]
Then $s$ is clearly an $R$-homomorphism. Furthermore, $r\circ s$ and $s \circ r$ are identity maps. This shows that \v{H}$^0(\mathcal{U},\mathcal{F})=\mathcal{F}(X)$ for an open covering $\mathcal{U}$ of $X$.
\end{proof}

\begin{pro}\label{cechvanishing}
Let $R$ be a semiring, $X$ be a topological space, and $\mathcal{F}$ be a sheaf of $R$-semimodules on $X$. Let $\mathcal{U}$ be an open covering of $X$ which consists of $n$ proper open subsets of $X$. Then $\textnormal{\v{H}}^m(\mathcal{U},\mathcal{F})=0$ $\forall m \geq n$.
\end{pro}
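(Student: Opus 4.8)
The plan is to reduce the claim to a purely combinatorial observation about how the \v{C}ech complex is indexed, after which the vanishing drops out immediately from Definition \ref{cechcoho}. Write the covering as $\mathcal{U}=\{U_1,\ldots,U_n\}$ indexed by the totally ordered set $I=\{1<2<\cdots<n\}$, which has cardinality $n$. The hypothesis that each $U_i$ is proper will play no role in the argument; it merely excludes the degenerate situation in which some $U_i$ already equals $X$, and the conclusion holds verbatim for an arbitrary $n$-element covering.

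First I would inspect the term $C^m(\mathcal{U},\mathcal{F})=\prod_{i_0<\cdots<i_m}\mathcal{F}(U_{i_0\cdots i_m})$ appearing in \eqref{cechcycle}. Each factor is indexed by a strictly increasing sequence $i_0<i_1<\cdots<i_m$ of $m+1$ elements of $I$. Since $|I|=n$, such a sequence can exist only when $m+1\leq n$, i.e. when $m\leq n-1$; as soon as $m\geq n$ there is no strictly increasing $(m+1)$-tuple of indices to be had, so the indexing set of the product is empty. Consequently $C^m(\mathcal{U},\mathcal{F})$ is the empty product, namely the zero semimodule, for every $m\geq n$.

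Finally I would feed this into the definition of cohomology. For $m\geq n$ the semimodule of cocycles $Z^m(\mathcal{U},\mathcal{F})=\{x\in C^m\mid d_m^+(x)=d_m^-(x)\}$ is contained in $C^m=\{0\}$ and hence is itself $\{0\}$; therefore $\textnormal{\v{H}}^m(\mathcal{U},\mathcal{F})=Z^m(\mathcal{U},\mathcal{F})/\rho^m=0$. In particular the congruence $\rho^m$ never enters, so the passage from the classical quotient to the semiring quotient introduces no difficulty here. The main point---and essentially the only point---is the combinatorial fact that an $n$-element index set admits no strictly increasing $(m+1)$-tuple once $m\geq n$; there is no genuine obstacle, and unlike the deeper vanishing statements that follow in the tropical setting (which rely on a partition-of-unity argument), this result is formal and holds for an arbitrary sheaf of $R$-semimodules on an arbitrary topological space, independently of the semiring structure.
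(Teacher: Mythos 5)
Your proposal is correct and is exactly the argument the paper intends: the paper's proof simply notes that $C^m=0$ for $m\geq n$, which is the combinatorial observation you spell out (no strictly increasing $(m+1)$-tuple exists in an $n$-element index set), after which $Z^m=0$ and the quotient by $\rho^m$ is vacuous. Your remark that properness of the $U_i$ is not actually needed is also accurate.
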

\begin{proof}
The proof is identical to that of the classical case since $C^m=0$ for $m \geq n$.
\end{proof}

Recall that a covering $\mathcal{V}=\{V_j\}_{j\in J}$ of a topological space $X$ is a refinement of a covering $\mathcal{U}=\{U_i\}_{i\in I}$ if there exists a map $\sigma:J \longrightarrow I$ such that $V_j \subseteq U_{\sigma(j)}$ for each $j \in J$. Suppose that $X^n:=C^n(\mathcal{U},\mathcal{F})$ and $Y^n:=C^n(\mathcal{V},\mathcal{F})$. Then the map $\sigma$ induces the following $\pm$-morphism: 
\begin{equation}\label{pm}
\sigma^n:X^n \longrightarrow Y^n, \quad \sigma^n(x)_{j_0\cdots j_n}=x_{\sigma(j_0) \cdots\sigma(j_n)}|_{V_{j_0 \cdots j_n}}.
\end{equation}
In fact, let $X=\{X^n,d_n^+,d_n^-\}$ and $Y=\{Y^n,\partial_n^+,\partial_n^-\}$. We have
\[(\sigma^{n+1}\circ d_n^+(x))_{j_0 \cdots j_{n+1}}=
(d_n^+(x))_{\sigma(j_0)\cdots\sigma(j_{n+1})}|_{V_{j_0 \cdots j_{n+1}}}\]
\[=(\sum_{k=0,k=even}^{n+1} x_{\sigma(j_0) \cdots\hat{\sigma(j_k)}\cdots\sigma(j_{n+1})}|_{U_{\sigma(j_0)\cdots\sigma(j_{n+1})}})|_{V_{j_0\cdots j_{n+1}}}
\]
\[=(\sum_{k=0,k=even}^{n+1} x_{\sigma(j_0)\cdots\hat{\sigma(j_k)}\cdots\sigma(j_{n+1})})|_{V_{j_0\cdots j_{n+1}}}=\sum_{k=0,k=even}^{n+1}\sigma^n(x)_{j_0\cdots\hat{j_k}\cdots j_{n+1}}|_{V_{j_0\cdots j_{n+1}}}\]
\[=(\partial_n^+\circ \sigma^n(x))_{j_0\cdots j_{n+1}}.
\]
Hence, we obtain $\sigma^{n+1}\circ d_n^+=\partial_n^+\circ \sigma^n$. Similarly one can prove that $\sigma^{n+1}\circ d_n^-=\partial_n^-\circ \sigma^n$. The $\pm$-morphism $\sigma=\{\sigma^n\}$ induces a homomorphism, \v{H}$^n(\mathcal{U},\mathcal{F}) \longrightarrow$ \v{H}$^n(\mathcal{V},\mathcal{F})$.\\ 
The collection of open coverings of a topological space $X$ becomes a directed system (with a refinement as a partial order). Since (co)limits exist in the category of semimodules, the following is well defined.
\begin{mydef}
Let $R$ be a semiring. Let $X$ be a topological space and $\mathcal{F}$ be a sheaf of $R$-semimodules on $X$. We define the $n$-th \v{C}ech cohomology of $X$ with values in $\mathcal{F}$ as follows: 
\[\textnormal{\v{H}}^n(X,\mathcal{F}):=\varinjlim_{\mathcal{U}} \textnormal{\v{H}}^n(\mathcal{U},\mathcal{F}).
\]
\end{mydef}
Note that from Proposition \ref{cechglobal}, we have \v{H}$^0(X,\mathcal{F})=\mathcal{F}(X)$.

\begin{myeg}
Consider the projective line $X=\mathbb{P}^1_M$ over an idempotent semifield $M$. More precisely, we consider $X$ as the semiring scheme with two open affine charts $U_0:=\Spec M[T]$ and $U_1:=\Spec M[\frac{1}{T}]$ glued along $T \mapsto \frac{1}{T}$. As in the classical case, one observes that $\mathcal{O}_X(X)=M$. From Proposition \ref{cechglobal}, we have $\textnormal{\v{H}}^0(X,\mathcal{O}_X)=M$. Furthermore, since $X$ has the open covering $\mathcal{U}=\{U_0,U_1\}$ which consists of two proper open subsets of $X$, we have $\textnormal{\v{H}}^n(\mathcal{U},\mathcal{O}_X)=0$ for $n \geq 2$ from Proposition \ref{cechvanishing}. Finally, with respect to the covering $\mathcal{U}=\{U_0,U_1\}$, we have
\[C:\xymatrixcolsep{3pc}\xymatrix{
 M[T]\oplus M[\frac{1}{T}] \ar[r]_-{d_{0}^-} \ar@<1ex>[r]^-{d_{0}^+} 
& M[T,\frac{1}{T}] \ar[r]_-{d_{1}^-} \ar@<1ex>[r]^-{d_{1}^+}
& 0 }, 
\]
where $d_0^+(a,b)=b$ and $d_0^-(a,b)=a$. It follows that $Z^1(\mathcal{U},\mathcal{O}_X)=M[T,\frac{1}{T}]$. Let $x,y \in Z^1(\mathcal{U},\mathcal{O}_X)$. Then, we can write $x=x_0+x_1,y=y_0+y_1$, where $x_0,y_0 \in M[T]$ and $x_1,y_1 \in M[\frac{1}{T}]$. Let $u=(x_0,y_1),v=(y_0,x_1)$. Then, we have 
\[x+d_0^+(u)+d_0^-(v)=y+d_0^+(v)+d_0^-(u).\]
It follows that $x \rho^1 y$ and hence $\textnormal{\v{H}}^1(\mathcal{U},\mathcal{O}_X)=0$. However, since this computation depends on the specific covering $\mathcal{U}$, we do not know yet whether $\textnormal{\v{H}}^n(X,\mathcal{O}_X)=\textnormal{\v{H}}^n(\mathcal{U},\mathcal{O}_X)$ or not for $n \geq 1$. 
\end{myeg}

In fact, we have the following.
\begin{pro}\label{cechforPn}
Let $X=\mathbb{P}^n_M$ be an $n$-dimensional projective space over an idempotent semifield $M$. Then: 
\[\textnormal{\v{H}}^0(X,\mathcal{O}_X)\simeq M, \quad   \textnormal{\v{H}}^m(X,\mathcal{U},\mathcal{O}_X) =0 \textrm{ for all }m\geq 1,\]
where $\mathcal{U}$ is the standard open covering which consists of principal open affine sets $D(x_i)$. 
\end{pro}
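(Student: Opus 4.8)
The plan is to compute the complex $C^\bullet:=C^\bullet(\mathcal U,\mathcal O_X)$ by exploiting the $\mathbb Z^{n+1}$-multigrading, thereby reducing the whole statement to the acyclicity of a family of purely combinatorial $\pm$-complexes over $M$, and then to establish that acyclicity by a subtraction-free contracting homotopy. Two inputs are immediate. Because $\mathcal U=\{D(x_0),\dots,D(x_n)\}$ consists of $n+1$ proper open sets, Proposition \ref{cechvanishing} gives $\textnormal{\v H}^m(\mathcal U,\mathcal O_X)=0$ for all $m\ge n+1$, so only the finite range $1\le m\le n$ needs attention; and $\textnormal{\v H}^0(\mathcal U,\mathcal O_X)=\mathcal O_X(X)=M$ follows from Proposition \ref{cechglobal} together with the standard identification $\mathcal O_{\mathbb P^n_M}(\mathbb P^n_M)=M$.

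For the reduction, write $S=M[x_0,\dots,x_n]$. For $\sigma=\{i_0<\cdots<i_p\}$ the semimodule $\mathcal O_X(U_{i_0\cdots i_p})$ is the degree-$0$ part of $S_{x_{i_0}\cdots x_{i_p}}$, which is the free $M$-semimodule on the Laurent monomials $x^{\mathbf a}$ with $\sum_j a_j=0$ and $a_j\ge 0$ for $j\notin\sigma$. Since the differentials $d^{\pm}$ of \eqref{+map} and \eqref{-map} are built from restriction maps, which carry a monomial to itself, they preserve the multidegree $\mathbf a$; hence $C^\bullet$ decomposes as a sum of subcomplexes $C^\bullet_{\mathbf a}$, and because both the cocycle condition and the congruence $\rho^p$ of Definition \ref{cechcoho} are coordinatewise in $\mathbf a$, cohomology respects this decomposition, $\textnormal{\v H}^p(\mathcal U,\mathcal O_X)\simeq\bigoplus_{\mathbf a}H^p(C^\bullet_{\mathbf a})$. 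Putting $N(\mathbf a):=\{j:a_j<0\}$, the monomial $x^{\mathbf a}$ occurs in the $\sigma$-component exactly when $N(\mathbf a)\subseteq\sigma$, so $C^p_{\mathbf a}$ is a product of copies of $M$ indexed by the $(p+1)$-subsets $\sigma\supseteq N(\mathbf a)$, with $d^+$ and $d^-$ the even- and odd-indexed face sums. Thus $C^\bullet_{\mathbf a}$ is the $\pm$-analogue, with coefficients in $M$, of the simplicial cochain complex of the faces of $\Delta^n$ containing $N(\mathbf a)$; re-indexing $\sigma\mapsto\sigma\setminus N(\mathbf a)$ identifies it, up to a degree shift, with the augmented full-simplex complex on the vertex set $\{0,\dots,n\}\setminus N(\mathbf a)$, which is nonempty since $\sum_j a_j=0$ forbids $N(\mathbf a)=\{0,\dots,n\}$.

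It then remains to prove that $H^0(C^\bullet_{\mathbf 0})=M$ and $H^p(C^\bullet_{\mathbf 0})=0$ for $p\ge1$, and that $H^p(C^\bullet_{\mathbf a})=0$ for all $p$ when $N(\mathbf a)\ne\emptyset$. For this I would fix a vertex $v_0\in\{0,\dots,n\}\setminus N(\mathbf a)$ and transport the classical cone contraction $h$, which satisfies $\delta h+h\delta=\mathrm{id}$, to the present framework by separating its even and odd faces. Over the idempotent $M$ the relation $\delta=d^+-d^-$ rearranges into the subtraction-free identity $d^+h+hd^+=\mathrm{id}+d^-h+hd^-$ (here $1+1=1$ in $M$ removes any spurious multiplicities). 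Applying this to a cocycle $x$, so that $d^+x=d^-x$, and setting $v:=hx$, yields a relation of the shape $x+d^-v+R=d^+v+R$ with $R:=h(d^+x)$, which is exactly the congruence $x\,\rho^p\,0$ of \eqref{cohomologycongruence} save for the residual term $R$.

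The main obstacle, and the sole point at which idempotency is indispensable, is the removal of this residual $R$: over a ring one cancels it, but here there is no subtraction. This is the purpose of the tropical partition of unity of Lemma \ref{affinevanishing}: choosing the cone coefficients compatibly with the canonical order \eqref{totalorder}, one arranges that $R$ is dominated by the remaining terms, so that $A+R=A$ and $R$ is simply absorbed, leaving honest witnesses $u,v$ for $x\,\rho^p\,0$. Reassembling the graded pieces gives $\textnormal{\v H}^0\simeq M$ and $\textnormal{\v H}^m=0$ for $1\le m\le n$, which together with Proposition \ref{cechvanishing} completes the computation. I expect the verification of this domination, coordinate by coordinate, to be the delicate heart of the proof.
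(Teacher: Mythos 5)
Your reduction to the range $1\le m\le n$ via Propositions \ref{cechglobal} and \ref{cechvanishing}, the multigraded splitting $C^\bullet=\bigoplus_{\mathbf a}C^\bullet_{\mathbf a}$, and the identification of each graded piece with a shifted augmented simplicial $\pm$-complex are all sound; the subtraction-free homotopy identity $d^+h+hd^+=\mathrm{id}+d^-h+hd^-$ does indeed hold on the nose for the cone at the smallest admissible vertex. The gap sits exactly where you anticipate it, and your proposed repair does not close it. First, the proposition assumes only that $M$ is an \emph{idempotent} semifield, not a totally ordered one, so the canonical order \eqref{totalorder} is merely a partial order and a domination argument phrased in terms of a total order is not available under the stated hypotheses. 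Second, Lemma \ref{affinevanishing} is the wrong tool to invoke: its partition-of-unity mechanism produces a unit $f_i\in A^*$ via the Nullstellensatz and is specific to the sheaf $\mathcal O_X^*$ of abelian groups; it offers nothing toward absorbing the residual $R=h(d^+x)$ in a complex of semimodules. As written, the decisive step --- passing from $x+d^-(hx)+R=d^+(hx)+R$ to an honest instance of the congruence \eqref{cohomologycongruence} --- is asserted rather than proved.

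The step can be closed inside your framework, but by a different and simpler mechanism that makes the homotopy identity unnecessary. Work in a fixed multidegree $\mathbf a$ with cone vertex $v_0\notin N(\mathbf a)$ and take $u=v=hx$. Then the congruence $x\,\rho^p\,0$ reads $x+(d^++d^-)(hx)=(d^++d^-)(hx)$, which by idempotency amounts to $x_\sigma\le\bigl((d^++d^-)(hx)\bigr)_\sigma$ in the canonical partial order for every $\sigma$. If $v_0\in\sigma$, the right-hand side equals $x_\sigma$ itself. If $v_0\notin\sigma$, the cocycle condition $d^+x=d^-x$ evaluated at the cone simplex $v_0\sigma$ writes $x_\sigma$ plus further terms as a sub-sum of $\sum_k x_{v_0(\sigma\setminus\{j_k\})}=\bigl((d^++d^-)(hx)\bigr)_\sigma$, so the required inequality follows with no total order and no estimate. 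This is, in essence, what the paper does directly and without the multigraded scaffolding: it decomposes each component $t_l$ of a cocycle as a sum of restrictions of sections defined on the facets $U_{l\setminus\{l_i\}}$ (which the cocycle condition makes possible), reassembles these pieces into a single cochain $u\in C^{r-1}$, sets $v=u$, and absorbs $t$ into $d^+_{r-1}(u)+d^-_{r-1}(u)$ by idempotency. Your route, once repaired, is a legitimate alternative --- the multigrading makes precise why the cone map lands in $C^{p-1}$ at all, a point the paper leaves implicit in the claim $t_l=\sum_i t_{l_i}$ --- but the residual must be handled through the cocycle condition together with $u=v$, not through an order-theoretic domination.
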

\begin{proof}
From Propositions \ref{cechglobal} and \ref{cechvanishing}, we only have to show that $\textnormal{\v{H}}^r(X,\mathcal{U},\mathcal{O}_X) =0$ for all $1 \leq r \leq m-1$. Let's fix $r$. It is enough to show that for $t \in Z^r(\mathcal{U},\mathcal{O}_X)$, there exist $u,v \in C^{r-1}(\mathcal{U},\mathcal{O}_X)$ such that 
\begin{equation}\label{cocylecon}
t+d_{r-1}^+(u)+d_{r-1}^-(v)=d_{r-1}^+(v)+d_{r-1}^-(u).
\end{equation}
Let $t=(t_l)$, where $l$ is an ordered $(r+1)$-tuple in $\{0,1,\dots,m\}^{(r+1)}$. With the covering $\mathcal{U}$, we can write $t_l=\sum_i t_{l_i}$, where $t_{l_i}$ is an element of $C^{r-1}(X)$. We define $u=(u_J)$ as follows: $u_J=\sum_{l}\sum_{l_i=J}t_{l_i}$ and let $v=u$. Then, since $M$ is an idempotent, we have for each ordered $(r+1)$-tuple $l$ that
\[t_l+(d_{r-1}^+(u))_l+(d_{r-1}^-(v))_l=(d_{r-1}^+(u))_l+(d_{r-1}^-(v))_l=(d_{r-1}^+(v))_l+(d_{r-1}^-(u))_l\]
This implies \eqref{cocylecon} as we desired.
\end{proof}

\begin{rmk}
One can easily observe that from the similar argument as in Proposition \ref{cechforPn}, when $\mathcal{F}$ is a flasque sheaf of semimodules on a semiring scheme $X$, we have the classical vanishing result for $\mathcal{F}$.
\end{rmk}

\noindent Next, we prove that the Picard group $\Pic(X)$ of a semiring scheme $X$ is isomorphic to the first \v{C}ech cohomology group of the sheaf $\mathcal{O}_X^*$. The proof is not much different from the classical case, but we include the proof for completeness. Note that $\mathcal{O}_X^*$ is the sheaf such that $\mathcal{O}_X^*(U)=\{a \in \mathcal{O}_X(U) \mid ab=1 \textrm{ for some } b \in \mathcal{O}_X(U)\}$ for an open subset $U$ of $X$. Even though $\mathcal{O}_X$ is a sheaf of semirings, $\mathcal{O}^*_X$ is a sheaf of (multiplicative) abelian groups. Hence, \v{H}$^1(\mathcal{U},\mathcal{O}^*_X)$ is an abelian group. We use the multiplicative notation for $\mathcal{O}^*_X$.\\ 
In what follows, let $X$ be a semiring scheme, $\mathcal{L}$ be an invertible sheaf of $\mathcal{O}_X$-semimodules on $X$, and $\mathcal{U}=\{U_i\}_{i\in I}$ be a covering of $X$ such that $\varphi_i:\mathcal{O}_X|_{U_i}\simeq \mathcal{L}|_{U_i}$ $\forall i \in I$. Let $e_i \in \mathcal{L}(U_i)$ be the image of $1 \in \mathcal{O}_X(U_i)$ under $\varphi_i(U_i)$. Through the following lemmas, we define a corresponding cocyle in \v{H}$^1(X,\mathcal{O}^*_X)$ for an invertible sheaf $\mathcal{L}$ on $X$. 
\begin{lem}\label{picke}
For $i<j \in I$ and $U_{ij}=U_i \cap U_j$, there exists $f_{ij} \in \mathcal{O}^*_X(U_{ij})$ such that 
\[e_i|_{U_{ij}}=(e_j|_{U_{ij}})f_{ij}.\]
\end{lem}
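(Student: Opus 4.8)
The plan is to exploit that, since $\mathcal{L}$ is invertible with chosen trivializations $\varphi_i$, each $e_i$ is a free $\mathcal{O}_X$-basis of $\mathcal{L}$ over $U_i$, and to transport this property to the overlap $U_{ij}$. First I would restrict the sheaf isomorphism $\varphi_i\colon \mathcal{O}_X|_{U_i}\simeq \mathcal{L}|_{U_i}$ to the open set $U_{ij}\subseteq U_i$. Because $\varphi_i$ is a morphism of sheaves, restriction commutes with $\varphi_i$, so the induced map $\varphi_i(U_{ij})\colon \mathcal{O}_X(U_{ij})\to \mathcal{L}(U_{ij})$ is again an isomorphism of $\mathcal{O}_X(U_{ij})$-semimodules, and it sends $1=1|_{U_{ij}}$ to $\varphi_i(U_i)(1)|_{U_{ij}}=e_i|_{U_{ij}}$. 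By $\mathcal{O}_X(U_{ij})$-linearity, $\varphi_i(U_{ij})(a)=a\,(e_i|_{U_{ij}})$ for every $a$, so every section in $\mathcal{L}(U_{ij})$ is uniquely of the form $a\,(e_i|_{U_{ij}})$; the analogous statement holds for $e_j|_{U_{ij}}$ via $\varphi_j(U_{ij})$.

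Next, since $\varphi_j(U_{ij})$ is surjective, there is an element $f_{ij}\in \mathcal{O}_X(U_{ij})$ with $\varphi_j(U_{ij})(f_{ij})=e_i|_{U_{ij}}$, that is, $e_i|_{U_{ij}}=(e_j|_{U_{ij}})\,f_{ij}$, which is exactly the asserted identity. Symmetrically, surjectivity of $\varphi_i(U_{ij})$ produces $g_{ij}\in \mathcal{O}_X(U_{ij})$ with $e_j|_{U_{ij}}=(e_i|_{U_{ij}})\,g_{ij}$. It then remains only to verify that $f_{ij}$ lies in $\mathcal{O}_X^*(U_{ij})$.

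To see that $f_{ij}$ is a unit, I would substitute the second relation into the first to obtain $e_i|_{U_{ij}}=(e_i|_{U_{ij}})\,(g_{ij}f_{ij})$. Since we also have the trivial expression $e_i|_{U_{ij}}=(e_i|_{U_{ij}})\cdot 1$, and since the assignment $a\mapsto a\,(e_i|_{U_{ij}})$ is injective (it is precisely the isomorphism $\varphi_i(U_{ij})$), the coefficients must coincide, giving $g_{ij}f_{ij}=1$. As $\mathcal{O}_X(U_{ij})$ is commutative we also get $f_{ij}g_{ij}=1$, so $f_{ij}\in \mathcal{O}_X^*(U_{ij})$ with inverse $g_{ij}$, completing the argument.

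The only point requiring genuine care is the cancellation $g_{ij}f_{ij}=1$. In the classical setting one is tempted to pass through additive inverses, but these are unavailable for semirings; instead the conclusion rests solely on the injectivity of the semimodule isomorphism $\varphi_i(U_{ij})$, i.e. on the uniqueness of the coefficient expressing a section in terms of the free generator $e_i|_{U_{ij}}$. This is what allows the classical proof to carry over verbatim to sheaves of $\mathcal{O}_X$-semimodules, with no hidden use of subtraction.
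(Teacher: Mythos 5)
Your proposal is correct and is exactly the argument the paper has in mind: the paper's proof is the one-line observation that $e_i|_{U_{ij}}$ and $e_j|_{U_{ij}}$ are both free generators (units under the trivializations) of $\mathcal{L}(U_{ij})$, and you have simply unpacked this, including the correct observation that $g_{ij}f_{ij}=1$ follows from injectivity of $\varphi_i(U_{ij})$ rather than from any subtraction. No issues.
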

\begin{proof}
This is clear since $e_i|_{U_{ij}}$ and $e_j|_{U_{ij}}$ are invertible elements in $\mathcal{O}^*_X(U_{ij})$.
\end{proof}

\noindent We fix $f_{ij}$ in Lemma \ref{picke}. We have the following:
\begin{lem}\label{canonicalf}
Let $f:=(f_{ij}) \in C^1(\mathcal{U},\mathcal{O}^*_X)$. Then we have $d_1^+(f)=d_1^-(f)$ and hence $f \in Z^1(\mathcal{U},\mathcal{O}^*_X)$. In particular, $f$ has a canonical image in $\textnormal{\v{H}}^1(\mathcal{U},\mathcal{O}^*_X)$.
\end{lem}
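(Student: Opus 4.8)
The plan is to reduce the equality $d_1^+(f)=d_1^-(f)$ to the classical multiplicative cocycle identity and then to derive that identity from the defining relations of the $f_{ij}$. First I would unwind the differentials \eqref{+map} and \eqref{-map} at level $n=1$, written multiplicatively since $\mathcal{O}_X^*$ is a sheaf of abelian groups. For an ordered triple $i<j<k$ over the triple overlap $U_{ijk}$, the even-index omissions (positions $0$ and $2$) feed $d_1^+$ while the odd-index omission (position $1$) feeds $d_1^-$, so that
\[
(d_1^+(f))_{ijk}=(f_{jk}|_{U_{ijk}})(f_{ij}|_{U_{ijk}}),\qquad (d_1^-(f))_{ijk}=f_{ik}|_{U_{ijk}}.
\]
Hence $d_1^+(f)=d_1^-(f)$ is equivalent to the familiar relation $f_{ij}f_{jk}=f_{ik}$ on every triple overlap $U_{ijk}$.

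Next I would establish this relation from Lemma \ref{picke}. Restricting all sections to $U_{ijk}$ and abbreviating the restrictions of the local generators by $e_i,e_j,e_k$, the defining equations read $e_i=e_jf_{ij}$, $e_j=e_kf_{jk}$, and $e_i=e_kf_{ik}$. Substituting the second into the first gives $e_i=e_k(f_{jk}f_{ij})$, which, compared with the third, yields
\[
e_k\,(f_{jk}f_{ij})=e_k\,f_{ik}\quad\text{on }U_{ijk}.
\]

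The remaining step is to cancel $e_k$, and this is the point that demands care in the semiring setting, where additive inverses are unavailable and one cannot simply subtract. The resolution is that $e_k$ is the image of $1$ under the isomorphism $\varphi_k\colon\mathcal{O}_X|_{U_k}\xrightarrow{\sim}\mathcal{L}|_{U_k}$, so on sections over the open subset $U_{ijk}\subseteq U_k$ the assignment $a\mapsto a\,e_k$ is precisely $\varphi_k(U_{ijk})$, a bijection and in particular injective. Therefore $e_k(f_{jk}f_{ij})=e_kf_{ik}$ forces $f_{jk}f_{ij}=f_{ik}$ on $U_{ijk}$, which is the desired cocycle identity and gives $d_1^+(f)=d_1^-(f)$; the main obstacle is thus overcome by the bijectivity of multiplication by a local generator rather than by any additive cancellation. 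Finally, since $f\in Z^1(\mathcal{U},\mathcal{O}_X^*)$, it determines a well-defined class in $\textnormal{\v{H}}^1(\mathcal{U},\mathcal{O}_X^*)=Z^1(\mathcal{U},\mathcal{O}_X^*)/\rho^1$, which is the asserted canonical image.
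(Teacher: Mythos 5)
Your proof is correct and follows essentially the same route as the paper: restrict the defining relations $e_i=e_jf_{ij}$ to the triple overlap, chain them to get $e_k(f_{jk}f_{ij})=e_kf_{ik}$, and conclude the multiplicative cocycle identity, which is exactly $d_1^+(f)=d_1^-(f)$. The only difference is that you explicitly justify cancelling $e_k$ via the bijectivity of the trivialization $\varphi_k$, a step the paper leaves implicit; this is a welcome clarification in the semiring setting but not a different argument.
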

\begin{proof}
For $i<j<k$, we have $e_i|_{U_{ij}}=(e_j|_{U_{ij}})f_{ij}$, $e_j|_{U_{jk}}=(e_k|_{U_{jk}})f_{jk}$. Thus we have
\[e_i|_{U_{ijk}}=(e_j|_{U_{ijk}})(f_{ij})|_{U_{ijk}}=
(e_k|_{U_{ijk}})(f_{jk})|_{U_{ijk}}(f_{ij})|_{U_{ijk}}=
e_k|_{U_{ijk}}(f_{ik})|_{U_{ijk}}.
\]
This implies that $(f_{jk})|_{U_{ijk}}(f_{ij})|_{U_{ijk}}=(f_{ik})|_{U_{ijk}}$. It follows that $(d^+_1(f))_{ijk}=(d^-_1(f))|_{ijk}$ and hence $f=(f_{ij}) \in Z^1(\mathcal{U},\mathcal{O}^*_X)$. Therefore, $f$ has a canonical image in \v{H}$^1(\mathcal{U},\mathcal{O}^*_X)$.
\end{proof}

\begin{lem}\label{doesnotchoicee_i}
The canonical image of $f \in C^1(\mathcal{U},\mathcal{O}_X^*)$ in $\textnormal{\v{H}}^1(\mathcal{U},\mathcal{O}^*_X)$ as in Lemma \ref{canonicalf} does not depend on the choice of $e_i$.
\end{lem}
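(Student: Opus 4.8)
The plan is to record how the cocycle $f$ transforms under a different choice of the generators $e_i$, and then to produce an explicit witness for the congruence $\rho^1$ showing that the two resulting cocycles agree in $\textnormal{\v{H}}^1(\mathcal{U},\mathcal{O}^*_X)$. First I would note that any two free generators $e_i,e_i'$ of the rank-one free $\mathcal{O}_X(U_i)$-semimodule $\mathcal{L}(U_i)$ differ by a unit: writing $e_i'=g_ie_i$ and $e_i=h_ie_i'$ (each uniquely, as $e_i$ and $e_i'$ are generators) gives $e_i=h_ig_ie_i$, so $h_ig_i=1$ by uniqueness of the coefficient, whence $g_i\in\mathcal{O}^*_X(U_i)$. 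Crucially this invokes only uniqueness of representation relative to a free generator, not additive inverses.

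Next I would compute the cocycle $f'=(f'_{ij})$ attached to the $e_i'$, determined by $e_i'|_{U_{ij}}=(e_j'|_{U_{ij}})f'_{ij}$. Substituting $e_i'=g_ie_i$ and the defining relation $e_i|_{U_{ij}}=(e_j|_{U_{ij}})f_{ij}$ of Lemma \ref{picke}, I get $(g_i|_{U_{ij}})(e_j|_{U_{ij}})f_{ij}=(g_j|_{U_{ij}})(e_j|_{U_{ij}})f'_{ij}$. Cancelling the free generator $e_j|_{U_{ij}}$ (again legitimate because representation as a multiple of a generator is unique, with no appeal to additive inverses) yields $f'_{ij}=f_{ij}(g_i|_{U_{ij}})(g_j|_{U_{ij}})^{-1}$. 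Since $(d_0^+(g))_{ij}=g_j|_{U_{ij}}$ and $(d_0^-(g))_{ij}=g_i|_{U_{ij}}$ for $g=(g_i)\in C^0(\mathcal{U},\mathcal{O}^*_X)$, this is the multiplicative identity $f'=f\cdot d_0^-(g)\cdot(d_0^+(g))^{-1}$.

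Finally I would check the congruence $f\,\rho^1\,f'$ straight from \eqref{cohomologycongruence}, where the operation $+$ is read multiplicatively since $\mathcal{O}^*_X$ is a sheaf of abelian groups. Taking $u$ to be the constant section $1$ and $v=g$, the left-hand side is $f\cdot d_0^+(1)\cdot d_0^-(g)=f\cdot d_0^-(g)$ and the right-hand side is $f'\cdot d_0^+(g)\cdot d_0^-(1)=f\cdot d_0^-(g)\,(d_0^+(g))^{-1}\,d_0^+(g)=f\cdot d_0^-(g)$; the two coincide, so $f$ and $f'$ have the same image in $\textnormal{\v{H}}^1(\mathcal{U},\mathcal{O}^*_X)$. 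The step I expect to require the most care is the cancellation of $e_j|_{U_{ij}}$: in the semiring setting this must rest on freeness of the generator rather than on additive cancellation. Once the transformation law $f'=f\cdot d_0^-(g)/d_0^+(g)$ is established, the congruence is routine—indeed, since $\mathcal{O}^*_X$ takes values in abelian groups, Remark \ref{abgroupcech} already guarantees that $\textnormal{\v{H}}^1(\mathcal{U},\mathcal{O}^*_X)$ coincides with classical \v{C}ech cohomology, making this the familiar statement that a coboundary does not change the cohomology class.
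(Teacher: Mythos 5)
Your proof is correct and follows essentially the same route as the paper: write $e_i'=g_ie_i$ with $g_i\in\mathcal{O}_X^*(U_i)$, derive $f_{ij}\,g_i|_{U_{ij}}=f'_{ij}\,g_j|_{U_{ij}}$, i.e.\ $f\cdot d_0^-(g)=f'\cdot d_0^+(g)$, and read this as an instance of the congruence $\rho^1$ (the paper leaves the choice $u=1$, $v=g$ implicit, and likewise takes for granted the points you spell out about $g_i$ being a unit and the cancellation of the free generator $e_j|_{U_{ij}}$ resting on uniqueness of coefficients rather than additive inverses).
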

\begin{proof}
Let $\{e_i'\}_{i\in I}$ be another choice with $\{f'_{ij}\}$. We can take $\{g_i\}_{i\in I}$, where $g_i \in \mathcal{O}^*_X(U_i)$ such that $e_i'=g_ie_i$. Then, we have 
$e_i|_{U_{ij}}=f_{ij}e_j|_{U_{ij}}$, $e_i'|_{U_{ij}}=f'_{ij}e'_j|_{U_{ij}}$. It follows that $g_i|_{U_{ij}}e_i|_{U_{ij}}=f'_{ij}e_j'|_{U_{ij}}=f'_{ij}g_j|_{U_{ij}}e_j|_{U_{ij}}$ and $g_i|_{U_{ij}}e_i|_{U_{ij}}=g_i|_{U_{ij}}f_{ij}e_j|_{U_{ij}}$. Therefore, $f_{ij}g_i|_{U_{ij}}=f'_{ij}g_j|_{U_{ij}}$. This implies that $f\cdot d^-_0(g)=f'\cdot d^+_0(g)$. In other words, $f$ and $f'$ give the same canonical image in \v{H}$^1(\mathcal{U},\mathcal{O}^*_X)$. 
\end{proof}

\noindent We denote the canonical image of $f \in C^1(\mathcal{U},\mathcal{O}_X^*)$ in \v{H}$^1(\mathcal{U},\mathcal{O}^*_X)$ by $\phi_{\mathcal{U}}(\mathcal{L})$. Let $\mathcal{U}=\{U_i\}_{i\in I}$ and $\mathcal{U}'=\{V_j\}_{j \in J}$ be two open coverings of $X$ such that $\mathcal{L}|_{U_i}\simeq \mathcal{O}_X|_{U_i}$ and $\mathcal{L}|_{V_j}\simeq \mathcal{O}_X|_{V_j}$ $\forall i\in I$, $j\in J$. We define a new covering $\mathcal{U}\cap \mathcal{U}':=\{U_i \cap V_j\}_{(i,j) \in I\times J}$ of $X$. Then, clearly $\mathcal{U}\cap\mathcal{U}'$ is a refinement of $\mathcal{U}$. It follows that $\phi_{\mathcal{U}}(\mathcal{L})$ has a canonical image in $\textnormal{\v{H}}^1(\mathcal{U}\cap \mathcal{U}',\mathcal{O}^*_X)$.

\begin{lem}
Let $\mathcal{U}=\{U_i\}_{i\in I}$ and $\mathcal{U}'=\{V_j\}_{j \in J}$ be two open coverings of $X$ such that $\mathcal{L}|_{U_i}\simeq \mathcal{O}_X|_{U_i}$ and $\mathcal{L}|_{V_j}\simeq \mathcal{O}_X|_{V_j}$ $\forall i\in I$, $j\in J$. Let $f \in C^1(\mathcal{U},\mathcal{O}_X^*)$ and $f' \in C^1(\mathcal{U'},\mathcal{O}_X^*)$ (as in Lemma \ref{canonicalf}). Then the canonical images of $f$ and $f'$ are same in $\textnormal{\v{H}}^1(\mathcal{U}\cap \mathcal{U}',\mathcal{O}^*_X)$. In particular, each invertible sheaf $\mathcal{L}$ determines a unique element $\phi(\mathcal{L})$ in $\textnormal{\v{H}}^1(X,\mathcal{O}^*_X)$.
\end{lem}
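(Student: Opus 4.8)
The plan is to realize the canonical images of both $f$ and $f'$ as the cocycles attached to $\mathcal{L}$ on the single covering $\mathcal{W}:=\mathcal{U}\cap\mathcal{U}'$ by two different choices of local generators, and then to quote Lemma \ref{doesnotchoicee_i}. Write $W_{(i,j)}:=U_i\cap V_j$. First I would note that $\mathcal{W}$ is again a trivializing covering for $\mathcal{L}$: since $W_{(i,j)}\subseteq U_i$ and $\mathcal{L}|_{U_i}\simeq\mathcal{O}_X|_{U_i}$, restriction yields $\mathcal{L}|_{W_{(i,j)}}\simeq\mathcal{O}_X|_{W_{(i,j)}}$. Endowing $I\times J$ with the lexicographic order, the covering $\mathcal{W}$ refines $\mathcal{U}$ through the map $\sigma\colon(i,j)\mapsto i$ and refines $\mathcal{U}'$ through $\tau\colon(i,j)\mapsto j$; each induces a $\pm$-morphism as in \eqref{pm} and hence a homomorphism on \v{C}ech cohomology, and the canonical images in question are the classes of $\sigma^1(f)$ and $\tau^1(f')$ in $\textnormal{\v{H}}^1(\mathcal{W},\mathcal{O}_X^*)$.

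Next I would identify these pushed-forward cocycles explicitly. On each $W_{(i,j)}$ the sheaf $\mathcal{L}$ has two natural generators, namely the restrictions $e_i|_{W_{(i,j)}}$ and $e'_j|_{W_{(i,j)}}$. Feeding the generators $\{e_i|_{W_{(i,j)}}\}$ into the recipe of Lemma \ref{canonicalf}, the defining relation $e_i|_{U_{ik}}=(e_k|_{U_{ik}})f_{ik}$ of Lemma \ref{picke}, restricted to $W_{(i,j)}\cap W_{(k,l)}$, shows that the resulting cocycle has entries $f_{ik}|_{W_{(i,j)}\cap W_{(k,l)}}$; by \eqref{pm} this is exactly $\sigma^1(f)$. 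Symmetrically, feeding in $\{e'_j|_{W_{(i,j)}}\}$ reproduces $\tau^1(f')$. Thus $\sigma^1(f)$ and $\tau^1(f')$ are precisely the two cocycles determined by $\mathcal{L}$ on the covering $\mathcal{W}$ via the two choices of generator.

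Finally I would invoke Lemma \ref{doesnotchoicee_i}, whose content is that such a change of generators leaves the canonical image in $\textnormal{\v{H}}^1(\mathcal{W},\mathcal{O}_X^*)$ unchanged; hence $\sigma^1(f)$ and $\tau^1(f')$ have the same class, which is the asserted equality. For the final assertion, recall $\textnormal{\v{H}}^1(X,\mathcal{O}_X^*)=\varinjlim_{\mathcal{U}}\textnormal{\v{H}}^1(\mathcal{U},\mathcal{O}_X^*)$. Any two trivializing coverings admit the common trivializing refinement $\mathcal{W}$, on which their cocycle classes have just been shown to agree; therefore all the classes $\phi_{\mathcal{U}}(\mathcal{L})$ determine one and the same element of the colimit, which we define to be $\phi(\mathcal{L})$.

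The one point demanding care, rather than a genuine obstacle, is the bookkeeping identification in the second step, matching the refinement formula \eqref{pm} against the generator relation of Lemma \ref{picke}; here the absence of additive inverses is harmless, since $\mathcal{O}_X^*$ is an honest sheaf of abelian groups and the entire argument takes place multiplicatively.
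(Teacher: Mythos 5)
Your proof is correct and follows essentially the same route as the paper: the paper's argument introduces the change-of-generator cochain $g=(g_{ik})$ with $e'_k|_{U_i\cap V_k}=g_{ik}\,e_i|_{U_i\cap V_k}$ and verifies directly that $\alpha(f)\cdot d_0^-(g)=\beta(f')\cdot d_0^+(g)$, which is precisely the computation in the proof of Lemma \ref{doesnotchoicee_i} inlined for the covering $\mathcal{U}\cap\mathcal{U}'$. You simply factor that computation through Lemma \ref{doesnotchoicee_i} by recognizing $\sigma^1(f)$ and $\tau^1(f')$ as the cocycles of $\mathcal{L}$ on the common refinement for two choices of local generators, which is a clean and valid packaging of the same idea.
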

\begin{proof}
Let $\{e_i\}_{i\in I}$, $\{f_{ij}\}_{i,j \in I}$ for $\mathcal{U}$ and $\{e_j'\}_{j\in J}$, $\{f'_{kl}\}_{k,l \in J}$ for $\mathcal{U}'$ as in Lemma \ref{picke}. We claim that the images of $\phi_{\mathcal{U}}(\mathcal{L})$ and $\phi_{\mathcal{U}'}(\mathcal{L})$ in \v{H}$^1(\mathcal{U}\cap\mathcal{U}',\mathcal{O}^*_X)$ are equal. Indeed, we can find $g_{ik} \in \mathcal{O}^*_X(U_i \cap V_k)$ such that $e'_k|_{U_i\cap V_k}=(g_{ik})e_i|_{U_i\cap V_k}$. Hence, from the relation $e'_k|_{U_{ij}\cap V_{kl}}=(f_{kl}')|_{U_{ij}\cap V_{kl}} \cdot e'_l|_{U_{ij}\cap V_{kl}}$, we have that $(g_{ik})|_{U_{ij}\cap V_{kl}} \cdot e_i|_{U_{ij}\cap V_{kl}}=(f_{kl}')|_{U_{ij}\cap V_{kl}}\cdot e'_l|_{U_{ij}\cap V_{kl}}=(f'_{kl})|_{U_{ij}\cap V_{kl}} \cdot (g_{jl})|_{U_{ij}\cap V_{kl}} \cdot e_j|_{U_{ij}\cap V_{kl}}$. It follows that
\begin{equation}\label{eqn1}
(g_{ik})|_{U_{ij}\cap V_{kl}} \cdot (f_{ij})|_{U_{ij}\cap V_{kl}}=(f_{kl}')|_{U_{ij}\cap V_{kl}} \cdot (g_{jl})|_{U_{ij}\cap V_{kl}}. 
\end{equation}
Let $g=(g_{ik})$ for $i \in I, k\in J$. Then, we have $g \in C^0(\mathcal{U}\cap\mathcal{U}',\mathcal{O}^*_X)$. Give the set $I\times J$ a dictionary order. Then we have
\begin{equation}\label{eqn2}
(d^+_0(g))|_{(i,k)\times (j,l)}=g_{jl}|_{U_{ij}\cap V_{kl}} \textrm{ and } (d^-_0(g))|_{(i,k)\times (j,l)}=g_{ik}|_{U_{ij}\cap V_{kl}}. 
\end{equation}
Let $\alpha:Z^1(\mathcal{U},\mathcal{O}^*_X) \longrightarrow Z^1(\mathcal{U}\cap \mathcal{U}',\mathcal{O}^*_X)$ be the $\pm$-morphism as in \eqref{pm}. Then $\alpha$ induces the map $\hat{\alpha}:$\v{H}$^1(\mathcal{U},\mathcal{O}^*_X) \longrightarrow$\v{H}$^1(\mathcal{U}\cap \mathcal{U}',\mathcal{O}^*_X)$. Similarly, for $\mathcal{U}'$, we obtain \[\beta:Z^1(\mathcal{U}',\mathcal{O}^*_X) \longrightarrow Z^1(\mathcal{U}\cap \mathcal{U}',\mathcal{O}^*_X),\quad  \hat{\beta}:\textnormal{\v{H}}^1(\mathcal{U}',\mathcal{O}^*_X) \longrightarrow\textnormal{\v{H}}^1(\mathcal{U}\cap \mathcal{U}',\mathcal{O}^*_X).\] 
In particular, if $\phi_{\mathcal{U}}(\mathcal{L})=[f]$, then $\hat{\alpha}([f])=[\alpha(f)]$, where $[f]$ is the equivalence class of $f \in Z^1(\mathcal{U},\mathcal{O}_X^*)$ in \v{H}$^1(\mathcal{U},\mathcal{O}_X^*)$. To complete the proof, we have to show that $[\alpha(f)]=[\beta(f')]$. We know that $\alpha(f)_{(i,k)\times (j,l)}=f_{ij}|_{U_{ij}\cap V_{kl}}$ and $\beta(f')_{(i,k)\times (j,l)}=f'_{kl}|_{U_{ij}\cap V_{kl}}$. It follows from \eqref{eqn1} and \eqref{eqn2} that \[(\alpha(f)\cdot d_0^-(g))|_{U_{ij}\cap V_{kl}}=(\beta(f')\cdot d_0^+(g))|_{U_{ij}\cap V_{kl}}.\]
This proves that $[\alpha(f)]=[\beta(f')]$. Thus, $f$ and $f'$ have the same image in \v{H}$^1(X,\mathcal{O}^*_X)$. We denote this image by $\phi(\mathcal{L})$.
\end{proof}

\noindent Consider the following map: 
\begin{equation}\label{picisotoH1}
\phi:\Pic(X) \longrightarrow\textnormal{\v{H}}^1(X,\mathcal{O}^*_X), \quad [\mathcal{L}]\mapsto \phi(\mathcal{L}),
\end{equation}
where $[\mathcal{L}]$ is the isomorphism class of $\mathcal{L}$ in $\Pic(X)$.

\begin{lem}
$\phi$ is well defined.
\end{lem}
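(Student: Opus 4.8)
The plan is to show that the assignment $[\mathcal{L}] \mapsto \phi(\mathcal{L})$ does not depend on the chosen representative of the isomorphism class; that is, that $\mathcal{L} \simeq \mathcal{L}'$ implies $\phi(\mathcal{L}) = \phi(\mathcal{L}')$. All the work establishing that $\phi(\mathcal{L})$ is a well-defined element of $\textnormal{\v{H}}^1(X,\mathcal{O}_X^*)$ — independent of the generators $e_i$ and of the trivializing covering — has already been carried out in Lemma \ref{doesnotchoicee_i} and the preceding lemma, so the only remaining point is invariance under isomorphism.

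First I would fix an isomorphism $\psi : \mathcal{L} \longrightarrow \mathcal{L}'$ of sheaves of $\mathcal{O}_X$-semimodules. Since $\mathcal{L}$ and $\mathcal{L}'$ are both invertible, I can pick a trivializing covering for each and pass to their common refinement $\mathcal{U}\cap\mathcal{U}'$; because $\phi$ has been shown to be independent of the covering, I may therefore assume a single covering $\mathcal{U} = \{U_i\}$ on which both sheaves are trivial, with generators $e_i \in \mathcal{L}(U_i)$ and $e_i' \in \mathcal{L}'(U_i)$ and associated cocycles $f = (f_{ij})$ and $f' = (f'_{ij})$ as in Lemma \ref{picke}.

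The key step is the observation that $\psi$ transports the generating system $(e_i)$ of $\mathcal{L}$ to a generating system $(\psi(e_i))$ of $\mathcal{L}'$ yielding exactly the transition cocycle $f$. Concretely, since $\psi$ is $\mathcal{O}_X$-linear and commutes with restrictions, applying $\psi$ to the relation $e_i|_{U_{ij}} = (e_j|_{U_{ij}}) f_{ij}$ of Lemma \ref{picke} gives $\psi(e_i)|_{U_{ij}} = (\psi(e_j)|_{U_{ij}}) f_{ij}$. As $\psi$ is an isomorphism, each $\psi(e_i)$ is again a generator of $\mathcal{L}'|_{U_i}$, so $(\psi(e_i))$ is a legitimate choice of generators for $\mathcal{L}'$ and its associated cocycle is precisely $f$.

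Finally I would invoke Lemma \ref{doesnotchoicee_i}: since the canonical image of the cocycle in $\textnormal{\v{H}}^1(\mathcal{U},\mathcal{O}_X^*)$ does not depend on the choice of generators, the cocycle $f$ arising from $(\psi(e_i))$ and the cocycle $f'$ arising from $(e_i')$ give the same class, namely $\phi_{\mathcal{U}}(\mathcal{L}')$. Hence $\phi_{\mathcal{U}}(\mathcal{L}) = \phi_{\mathcal{U}}(\mathcal{L}')$, and passing to the colimit over coverings yields $\phi(\mathcal{L}) = \phi(\mathcal{L}')$. I expect no serious obstacle here; the argument is a direct translation of the classical one, and the only point needing care — that the $\mathcal{O}_X$-linearity of $\psi$ survives in the absence of additive inverses — is automatic, since the computation uses only multiplicativity and compatibility with restriction, never subtraction.
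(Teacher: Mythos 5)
Your proposal is correct and follows essentially the same route as the paper: fix an isomorphism, transport the generators $e_i$ through it to get generators of $\mathcal{L}'$ with the identical transition cocycle $f$, and then invoke the independence-of-generators lemma (Lemma \ref{doesnotchoicee_i}) to conclude $\phi(\mathcal{L})=\phi(\mathcal{L}')$. No gaps; the paper's proof is just a terser version of the same argument.
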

\begin{proof}
Suppose that $\mathcal{L}\simeq \mathcal{L}'$. We have to show that $\phi(\mathcal{L})=\phi(\mathcal{L}')$. Let us fix an isomorphism $\varphi:\mathcal{L} \longrightarrow \mathcal{L}'$. We can find an open covering $\mathcal{U}=\{U_i\}_{i\in I}$ of $X$ such that on $U_i$ both $\mathcal{L}$ and $\mathcal{L}'$ are isomorphic to $\mathcal{O}_X$. Let $\{e_i\}$ and $\{f_{ij}\}$ be as in Lemma \ref{picke} for $\mathcal{L}$. Then we have $\varphi_{U_{ij}}(e_i)|_{U_{ij}}=f_{ij}\cdot \varphi_{U_{ij}}(e_j)|_{U_{ij}}$. Since $\phi(\mathcal{L}')$ does not depend on the choice of $\{e_i'\}$, we let $e_i'=\varphi_{U_i}(e_i)$ as in Lemma \ref{picke} for $\mathcal{L}'$. Then the desired property follows.
\end{proof}
\begin{lem}
$\phi$ is a group homomorphism.
\end{lem}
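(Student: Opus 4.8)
The plan is to verify that $\phi$ respects the group structures: the Picard group is a group under $\otimes_{\mathcal{O}_X}$ and $\textnormal{\v{H}}^1(X,\mathcal{O}^*_X)$ is a (multiplicative) abelian group, so it suffices to show $\phi(\mathcal{L}\otimes_{\mathcal{O}_X}\mathcal{L}')=\phi(\mathcal{L})\cdot\phi(\mathcal{L}')$ for invertible sheaves $\mathcal{L},\mathcal{L}'$ on $X$. Since $\phi(\mathcal{L})$ does not depend on the trivializing covering (as established above), I would first pass to a common open covering $\mathcal{U}=\{U_i\}_{i\in I}$ of $X$ on which both $\mathcal{L}$ and $\mathcal{L}'$ restrict to $\mathcal{O}_X$, obtained by intersecting two trivializing coverings exactly as in the construction of $\phi$. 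I then fix trivializing sections $e_i\in\mathcal{L}(U_i)$ and $e_i'\in\mathcal{L}'(U_i)$, with associated cocycles $f=(f_{ij})$ and $f'=(f'_{ij})$ as in Lemma \ref{canonicalf}, so that $e_i|_{U_{ij}}=(e_j|_{U_{ij}})f_{ij}$ and $e_i'|_{U_{ij}}=(e_j'|_{U_{ij}})f'_{ij}$.

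Next I would produce a trivialization of the tensor product. On each $U_i$ the isomorphisms $\varphi_i$ and $\varphi_i'$ induce an isomorphism $\mathcal{O}_X|_{U_i}\simeq\mathcal{O}_X|_{U_i}\otimes_{\mathcal{O}_X}\mathcal{O}_X|_{U_i}\simeq(\mathcal{L}\otimes_{\mathcal{O}_X}\mathcal{L}')|_{U_i}$, where the first isomorphism uses $R\otimes_R R\simeq R$ and the second is $\varphi_i\otimes\varphi_i'$; this shows that $\mathcal{L}\otimes_{\mathcal{O}_X}\mathcal{L}'$ is invertible and trivialized on $\mathcal{U}$, with trivializing section the image of $e_i\otimes e_i'$ under the sheafification map. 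The key computation is to restrict $e_i\otimes e_i'$ to $U_{ij}$ and apply bilinearity of the tensor product over $\mathcal{O}_X$: since $f_{ij},f'_{ij}\in\mathcal{O}^*_X(U_{ij})$ are scalars,
\[
(e_i\otimes e_i')|_{U_{ij}}=(f_{ij}\,e_j|_{U_{ij}})\otimes(f'_{ij}\,e_j'|_{U_{ij}})=(f_{ij}f'_{ij})\big((e_j\otimes e_j')|_{U_{ij}}\big).
\]
Hence the cocycle attached to $\mathcal{L}\otimes_{\mathcal{O}_X}\mathcal{L}'$ relative to the sections $e_i\otimes e_i'$ is precisely $(f_{ij}f'_{ij})=f\cdot f'$ in $C^1(\mathcal{U},\mathcal{O}^*_X)$. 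As the product in $\textnormal{\v{H}}^1(\mathcal{U},\mathcal{O}^*_X)$ is induced by the multiplicative structure of $\mathcal{O}^*_X$, passing to cohomology classes gives $\phi_{\mathcal{U}}(\mathcal{L}\otimes_{\mathcal{O}_X}\mathcal{L}')=\phi_{\mathcal{U}}(\mathcal{L})\cdot\phi_{\mathcal{U}}(\mathcal{L}')$, and taking images in $\textnormal{\v{H}}^1(X,\mathcal{O}^*_X)$ yields the claim.

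The main obstacle I anticipate is the careful handling of the Pareigis--Rohrl tensor product together with sheafification. Concretely, I must check that pulling the units $f_{ij}$ and $f'_{ij}$ out of a simple tensor is legitimate — this is bilinearity over $\mathcal{O}_X(U_{ij})$, valid at the level of the presheaf $U\mapsto\mathcal{L}(U)\otimes_{\mathcal{O}_X(U)}\mathcal{L}'(U)$ — and that the displayed identity, which a priori lives in this presheaf, is preserved under the sheafification map, so that it genuinely computes the tensor-product cocycle in $(\mathcal{L}\otimes_{\mathcal{O}_X}\mathcal{L}')(U_{ij})$. Both points follow from the good behavior of the tensor product recorded in Lemma \ref{stalklem} and the discussion preceding its statement, exactly as in the classical case where the argument never invokes additive inverses; once these compatibilities are spelled out, the remaining verification is routine.
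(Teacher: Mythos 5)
Your proposal is correct and follows essentially the same route as the paper's proof: pass to a common trivializing covering, take $e_i\otimes e_i'$ as the trivializing sections of $\mathcal{L}\otimes_{\mathcal{O}_X}\mathcal{L}'$, and observe by bilinearity that the resulting cocycle is $(f_{ij}f'_{ij})$. Your additional remarks on the compatibility with sheafification and the Pareigis--Rohrl tensor product only make explicit what the paper leaves implicit.
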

\begin{proof}
Suppose that $\mathcal{L}$ and $\mathcal{L}'$ are invertible sheaves of $\mathcal{O}_X$-semimodules. Then so is $\mathcal{L}\otimes_{\mathcal{O}_X}\mathcal{L}'$ (this directly follows from Lemma \ref{stalklem}). Therefore, we can find an affine open covering $\mathcal{U}=\{U_i=\Spec R_i\}_{i \in I}$ of $X$ such that $(\mathcal{L}\otimes_{\mathcal{O}_X}\mathcal{L}')(U_i) \simeq \mathcal{O}_X(U_i)\simeq\mathcal{L}(U_i)\simeq\mathcal{L}'(U_i)\simeq R_i$. In particular, we have $(\mathcal{L}\otimes_{\mathcal{O}_X}\mathcal{L}')(U_i) \simeq (\mathcal{L}(U_i)\otimes_{\mathcal{O}_X}\mathcal{L}'(U_i))$. Let $\{e_i\}_{i\in I}$, $\{f_{ij}\}_{i,j \in I}$ for $\mathcal{L}$ and $\{e_j'\}_{j\in J}$, $\{f'_{kl}\}_{k,l \in J}$ for $\mathcal{L}'$ as in Lemma \ref{picke} on the open covering $\mathcal{U}$. Then we can take $\{e_i\otimes e_i'\}$ as a basis for $(\mathcal{L}\otimes_{\mathcal{O}_X}\mathcal{L}')(U_i)$ and the corresponding transition map is $F=(f_{ij}\cdot f_{ij}')$. It follows that $\phi(\mathcal{L}\otimes_{\mathcal{O}_X}\mathcal{L}')=\phi(\mathcal{L})\phi(\mathcal{L}')$.
\end{proof}

\begin{lem}
$\phi([\mathcal{L}])=1$ if and only if $[\mathcal{L}]$ is the isomorphism class of $\mathcal{O}_X$. In particular, $\phi$ is injective.
\end{lem}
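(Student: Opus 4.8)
The plan is to prove the two implications and then read off injectivity from the fact, established just above, that $\phi$ is a group homomorphism. The forward implication is immediate: if $\mathcal{L}\simeq\mathcal{O}_X$ then, since $\phi$ is well defined, $\phi(\mathcal{L})=\phi(\mathcal{O}_X)$, and for $\mathcal{O}_X$ one may take any trivializing covering with $e_i=1$, so that every transition element $f_{ij}$ equals $1$ and the associated cocycle of Lemma \ref{canonicalf} is trivial; hence $\phi([\mathcal{L}])=1$.

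For the converse I would begin from $\phi([\mathcal{L}])=1$ and unwind its meaning. Fix a covering $\mathcal{U}=\{U_i\}$ trivializing $\mathcal{L}$, with generators $e_i$ and cocycle $f=(f_{ij})\in Z^1(\mathcal{U},\mathcal{O}^*_X)$. Because $\textnormal{\v{H}}^1(X,\mathcal{O}^*_X)$ is a colimit over refinements, vanishing of $\phi(\mathcal{L})$ means that, after replacing $\mathcal{U}$ by a suitable refinement and pulling back the $e_i$ and $f_{ij}$ along the $\pm$-morphism \eqref{pm}, the class $[f]$ is trivial in $\textnormal{\v{H}}^1(\mathcal{U},\mathcal{O}^*_X)$. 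The crucial point is that $\mathcal{O}^*_X$ is a sheaf of (multiplicative) abelian groups, so by Remark \ref{abgroupcech} the two-map congruence $\rho^1$ collapses to the classical coboundary relation: $[f]=1$ is equivalent to the existence of $g=(g_i)\in C^0(\mathcal{U},\mathcal{O}^*_X)$ with $f_{ij}=(d_0^+g)_{ij}\,(d_0^-g)_{ij}^{-1}=g_j g_i^{-1}$ on each $U_{ij}$. Equivalently, one may unwind \eqref{cohomologycongruence} by hand: if $u,v\in C^0$ witness $f\rho^1 1$, then setting $g_i:=v_i u_i^{-1}$ yields exactly $f_{ij}=g_j g_i^{-1}$.

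With such units $g_i$ in hand I would construct a global generator of $\mathcal{L}$. Put $e_i':=g_i e_i\in\mathcal{L}(U_i)$. Then, using Lemma \ref{picke} and $f_{ij}=g_j g_i^{-1}$, one computes $e_i'|_{U_{ij}}=g_i(e_i|_{U_{ij}})=g_i f_{ij}(e_j|_{U_{ij}})=g_j(e_j|_{U_{ij}})=e_j'|_{U_{ij}}$, so the local sections $e_i'$ agree on overlaps and, $\mathcal{L}$ being a sheaf, glue to a global section $e\in\mathcal{L}(X)$ with $e|_{U_i}=e_i'$. Since each $g_i$ is invertible and $e_i$ generates $\mathcal{L}|_{U_i}\simeq\mathcal{O}_X|_{U_i}$, the morphism $\mathcal{O}_X\to\mathcal{L}$, $a\mapsto a\,e$, restricts on every $U_i$ to $\varphi_i$ composed with multiplication by the unit $g_i$, hence is an isomorphism on each $U_i$ and therefore an isomorphism of sheaves. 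Thus $\mathcal{L}\simeq\mathcal{O}_X$, i.e.\ $[\mathcal{L}]$ is the identity of $\Pic(X)$. Finally, $\phi$ being a group homomorphism with trivial kernel by the two implications just proved, it is injective.

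The step I expect to require the most care is the passage from $\phi([\mathcal{L}])=1$ to the clean coboundary identity $f_{ij}=g_j g_i^{-1}$: one must handle the colimit over refinements correctly and, above all, verify that the two-map congruence $\rho^1$ genuinely reduces to the classical coboundary relation here. This is precisely where the group structure of $\mathcal{O}^*_X$ (rather than a mere semimodule) enters, through Remark \ref{abgroupcech}; everything afterward is the classical gluing argument, which uses only multiplicative invertibility and never additive inverses, so it transfers verbatim to the semiring setting.
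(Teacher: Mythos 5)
Your argument is correct and follows essentially the same route as the paper: from $\phi([\mathcal{L}])=1$ you extract $g=(g_i)\in C^0(\mathcal{U},\mathcal{O}^*_X)$ with $f_{ij}=g_j g_i^{-1}$ (the paper writes this as $d_0^+(g)=f\cdot d_0^-(g)$, which is the same identity), glue the sections $g_ie_i$ into a global generator, and obtain $\mathcal{L}\simeq\mathcal{O}_X$; the forward direction via $e_i=1$ and the deduction of injectivity from the homomorphism property are likewise identical. Your explicit unwinding of $\rho^1$ via $g_i:=v_iu_i^{-1}$ and the remark on the colimit over refinements only make precise steps the paper treats more tersely.
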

\begin{proof}
Suppose that $\phi(\mathcal{L})=1$. Let $\mathcal{U}=\{U_i\}_{i \in I}$ be an open covering of $X$ such that $\mathcal{L}|_{U_i}\simeq \mathcal{O}_X|_{U_i}$ $\forall i \in I$ and let $f$ and $e_i$ be as in Lemma \ref{picke}. Since the canonical image of $f$ does not depend on the choice of an open covering $\mathcal{U}$, we may assume that $[f]=[1] \in $\v{H}$^1(\mathcal{U},\mathcal{O}^*_X)$. This implies that there exists $g \in C^0(\mathcal{U},\mathcal{O}^*_X)$ such that $d_0^+(g)=f\cdot d_0^-(g)$. Hence, $(d_0^+(g))_{ij}=(f\cdot d_0^-(g))_{ij}$ and $f_{ij}\cdot g_i|_{U_{ij}}=g_j|_{U_{ij}}$. It follows that $(g_ie_i)|_{U_{ij}}=g_i|_{U_{ij}}e_i|_{U_{ij}}=g_i|_{U_{ij}}f_{ij}e_j|_{U_{ij}}=g_j|_{U_{ij}}e_j|_{U_{ij}}=(g_je_j)|_{U_{ij}}$. Thus, $e_ig_i$ and $e_jg_j$ agree on $U_{ij}$ and hence we can glue them to obtain the global isomorphism $\varphi:\mathcal{L} \longrightarrow \mathcal{O}_X$. Conversely, if $\mathcal{L}\simeq \mathcal{O}_X$, then clearly $\phi(\mathcal{L})=1$. In fact, one can take $e_i=e|_{U_i}$, where $e$ is the identity in $\mathcal{O}_X(X)$.
\end{proof}
\begin{lem}
$\phi$ is surjective.
\end{lem}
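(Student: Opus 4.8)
The plan is to invert the construction leading up to \eqref{picisotoH1}: starting from a \v{C}ech class I will build an invertible sheaf by gluing copies of $\mathcal{O}_X$ along the transition data of the class, and then check that $\phi$ returns the class I began with.

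First I would reduce to an ordinary cocycle. Since $\mathcal{O}_X^*$ is a sheaf of (multiplicative) abelian groups, Remark \ref{abgroupcech} identifies $\textnormal{\v{H}}^1(\mathcal{U},\mathcal{O}_X^*)$ with the classical first \v{C}ech cohomology group of the abelian-group-valued sheaf $\mathcal{O}_X^*$. Hence an arbitrary element of $\textnormal{\v{H}}^1(X,\mathcal{O}_X^*)=\varinjlim_{\mathcal{U}}\textnormal{\v{H}}^1(\mathcal{U},\mathcal{O}_X^*)$ is represented by an ordinary $1$-cocycle $f=(f_{ij})$ on some covering $\mathcal{U}=\{U_i\}_{i\in I}$; that is, $d_1^+(f)=d_1^-(f)$, which by the computation in Lemma \ref{canonicalf} is exactly the relation $(f_{jk})|_{U_{ijk}}(f_{ij})|_{U_{ijk}}=(f_{ik})|_{U_{ijk}}$ on triple overlaps. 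As each $f_{ij}$ is invertible, I may normalize so that $f_{ii}=1$ and $f_{ji}=f_{ij}^{-1}$, obtaining transition data indexed by all pairs.

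Next I would glue. For each $i$ set $\mathcal{L}_i:=\mathcal{O}_X|_{U_i}$, and let $\varphi_{ij}:\mathcal{L}_i|_{U_{ij}}\longrightarrow\mathcal{L}_j|_{U_{ij}}$ be the isomorphism of sheaves of $\mathcal{O}_X$-semimodules given by multiplication by $f_{ij}$. The cocycle relation says precisely that $\varphi_{jk}\circ\varphi_{ij}=\varphi_{ik}$ on $U_{ijk}$, so the pair $(\mathcal{L}_i,\varphi_{ij})$ constitutes gluing data. The standard sheaf-gluing lemma applies verbatim to sheaves of $\mathcal{O}_X$-semimodules: it is assembled purely from products and equalizers and never invokes additive inverses, so it produces a sheaf $\mathcal{L}$ of $\mathcal{O}_X$-semimodules together with isomorphisms $\psi_i:\mathcal{L}|_{U_i}\longrightarrow\mathcal{O}_X|_{U_i}$ satisfying $\varphi_{ij}=\psi_j\circ\psi_i^{-1}$ on $U_{ij}$. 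Since each $\psi_i$ exhibits $\mathcal{L}|_{U_i}\simeq\mathcal{O}_X|_{U_i}$, the sheaf $\mathcal{L}$ is invertible in the sense of $\S 3$.

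Finally I would evaluate $\phi(\mathcal{L})$. Taking $e_i:=\psi_i^{-1}(1)\in\mathcal{L}(U_i)$, the image of $1\in\mathcal{O}_X(U_i)$, the identity $\psi_i^{-1}=\psi_j^{-1}\circ\varphi_{ij}$ together with the $\mathcal{O}_X$-linearity of $\psi_j^{-1}$ gives $e_i|_{U_{ij}}=\psi_j^{-1}(f_{ij})=(e_j|_{U_{ij}})f_{ij}$, so the cocycle attached to $\mathcal{L}$ by Lemma \ref{picke} is exactly $f$. Therefore $\phi(\mathcal{L})=[f]$, and $\phi$ is surjective; combined with the preceding lemmas this establishes $\Pic(X)\simeq\textnormal{\v{H}}^1(X,\mathcal{O}_X^*)$. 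The one step meriting care is the gluing: I must confirm that the classical sheaf-gluing construction survives the passage to semimodules, which it does precisely because consistency on triple overlaps is encoded entirely by the cocycle relation inherited from $d_1^+(f)=d_1^-(f)$, with no cancellation required.
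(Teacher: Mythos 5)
Your proposal is correct and follows essentially the same route as the paper: both take a representing cocycle $f=(f_{ij})$ on a covering $\mathcal{U}$, glue the sheaves $\mathcal{L}_i=\mathcal{O}_X|_{U_i}$ along the multiplication-by-$f_{ij}$ isomorphisms (whose compatibility on triple overlaps is exactly the condition $d_1^+(f)=d_1^-(f)$), and verify via $e_i=\psi_i^{-1}(1)$ that the resulting invertible sheaf maps back to $[f]$. Your added remarks on normalizing $f_{ii}=1$, $f_{ji}=f_{ij}^{-1}$ and on the gluing lemma surviving the passage to semimodules are welcome elaborations of steps the paper leaves implicit, but they do not change the argument.
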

\begin{proof}
Notice that $\alpha \in$\v{H}$^1(X,\mathcal{O}^*_X)$ comes from $[f] \in $\v{H}$^1(\mathcal{U},\mathcal{O}^*_X)$ for an open covering $\mathcal{U}=\{U_i\}_{i\in I}$ of $X$. Let $\mathcal{L}_i:=\mathcal{O}_X|_{U_i}$ for each $i \in I$. Let $f=(f_{ij}) \in Z^1(\mathcal{U},\mathcal{O}^*_X)$. Then, for $i <j$, each $f_{ij}$ defines the following isomorphism: \[\phi_{ij}:\mathcal{L}_i|_{U_{ij}} \longrightarrow \mathcal{L}_j|_{U_{ij}},\quad s \mapsto f_{ij}\cdot s.\] 
We define $\phi_{ii}:=id$. Since $f\in Z^1(\mathcal{U},\mathcal{O}^*_X)$, we have $d_1^+(f)=d_1^-(f)$. It follows that $(d_1^+(f))_{ijk}=f_{jk}\cdot f_{ij}=(d_1^-(f))_{ijk}=f_{ik}$, and $f_{ij}\cdot f_{jk}=f_{ik}$. This implies that $\phi_{ik}=\phi_{jk}\circ \phi_{ij}$ and therefore one can glue $\mathcal{L}_i$ to obtain the invertible sheaf $\mathcal{L}$. Let $e_i$ be the image of $1$ under the isomorphism $\mathcal{O}_X(U_i)\simeq \mathcal{L}(U_i)$. Then we obtain the corresponding $f=(f_{ij})$. This implies that $\phi([\mathcal{L}])=\alpha$, hence $\phi$ is surjective.
\end{proof}

\noindent Finally, we conclude the following theorem via the isomorphism $\phi$.

\begin{mythm}\label{cech and picard equal}
$\Pic(X)\simeq$\textnormal{\v{H}}$^1(X,\mathcal{O}^*_X)$ for a semiring scheme $(X,\mathcal{O}_X)$.
\end{mythm}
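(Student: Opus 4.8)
The plan is to assemble the preceding sequence of lemmas, which have already carried out all the substantive work; the theorem itself is then immediate. The strategy throughout has been to construct the explicit map $\phi:\Pic(X)\longrightarrow \textnormal{\v{H}}^1(X,\mathcal{O}^*_X)$ of \eqref{picisotoH1}, which sends the isomorphism class of an invertible sheaf $\mathcal{L}$ to the \v{C}ech class of the transition cocycle $f=(f_{ij})$ extracted in Lemma~\ref{picke} from local trivializations $\varphi_i:\mathcal{O}_X|_{U_i}\simeq\mathcal{L}|_{U_i}$, and then to verify that $\phi$ is a bijective group homomorphism.

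First I would record that $\phi$ is well defined: Lemma~\ref{canonicalf} shows the chosen $f$ is a cocycle, Lemma~\ref{doesnotchoicee_i} shows its class is independent of the choice of local generators $e_i$, and the subsequent comparison lemma shows independence of the covering, so that $\phi(\mathcal{L})$ is intrinsic to the isomorphism class. The multiplicativity lemma establishes that $\phi$ is a group homomorphism, via the tensor trivialization $\{e_i\otimes e_i'\}$ whose transition cocycle is $(f_{ij}f_{ij}')$. Injectivity is the content of the kernel computation: $\phi(\mathcal{L})=1$ forces the local sections $g_ie_i$ to glue into a global isomorphism $\mathcal{L}\simeq\mathcal{O}_X$. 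Surjectivity is furnished by the final lemma, where a cocycle $f=(f_{ij})$ is used to glue the pieces $\mathcal{O}_X|_{U_i}$ along the multiplication isomorphisms $\phi_{ij}(s)=f_{ij}\cdot s$, the relation $f_{ij}f_{jk}=f_{ik}$ supplying exactly the compatibility $\phi_{ik}=\phi_{jk}\circ\phi_{ij}$ needed to form an invertible sheaf realizing the given class. Combining these four facts, $\phi$ is a group isomorphism, which is the assertion of the theorem.

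As for the main obstacle, there is no remaining difficulty at the level of the theorem statement; the genuine work lies upstream. The most delicate input is the independence of the cocycle class from the covering, which requires passing to the common refinement $\mathcal{U}\cap\mathcal{U}'$ and tracking the comparison $\pm$-morphisms $\hat{\alpha},\hat{\beta}$ against the relations \eqref{eqn1} and \eqref{eqn2}. A point specific to the semiring setting is that the congruence-based definition of $\textnormal{\v{H}}^1$ replaces the usual coboundary relation by the symmetric condition involving both $d_0^+$ and $d_0^-$; one must therefore confirm that the classical gluing arguments survive the absence of additive inverses. They do, precisely because every identity used in the construction of $\phi$ takes place in the sheaf of units $\mathcal{O}^*_X$ and is purely multiplicative.
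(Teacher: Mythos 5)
Your proposal is correct and follows essentially the same route as the paper: the theorem is assembled from the same chain of lemmas (the cocycle construction of Lemma~\ref{picke} and Lemma~\ref{canonicalf}, independence of generators and coverings, and the well-definedness, homomorphism, injectivity, and surjectivity lemmas for $\phi$). Your closing observation that the gluing arguments survive the absence of additive inverses because everything takes place in the multiplicative group $\mathcal{O}^*_X$ matches the paper's implicit reasoning.
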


Next, we compute the Picard group of a projective space $\mathbb{P}^n_M$ over a totally ordered idempotent semifield $M$.

\begin{lem}\label{affinevanishing}
Let $M$ be a totally ordered idempotent semifield. Fix a monomial $g \in M[x_0,...,x_r]$. Let $A:=M[x_0,...,x_r]_g$, $X=\Spec A$, and $\mathcal{U}=\{D(f_i)\}$ be a finite covering of $X$ which consists of principal open subsets. Then \textnormal{\v{H}}$^n(\mathcal{U},\mathcal{O}_X^*)=0$ for all $n \geq 1$. 
\end{lem}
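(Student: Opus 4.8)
The plan is to reduce to a classical computation and then exploit the very rigid structure of the unit sheaf over a totally ordered idempotent semifield. First I would invoke Remark \ref{abgroupcech}: since $\mathcal{O}_X^*$ is a sheaf of (multiplicative) abelian groups, the complex $\{C^n(\mathcal{U},\mathcal{O}_X^*),d_n^+,d_n^-\}$ of Definition \ref{cechcoho} carries the same information as the ordinary \v{C}ech complex with differential $\partial_n=d_n^+/d_n^-$, and the two notions of cohomology coincide. Thus it suffices to prove that the ordinary \v{C}ech cohomology $\textnormal{\v{H}}^n(\mathcal{U},\mathcal{O}_X^*)$ vanishes for $n\ge 1$. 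I would also record at the outset that $X=\Spec A$ is irreducible — the polynomial semialgebra has no zero divisors, so $(0)$ is prime and $X$ has a generic point — whence every finite intersection $U_{i_0\cdots i_n}=D(f_{i_0}\cdots f_{i_n})$ is nonempty.

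Next I would analyze the groups $\mathcal{O}_X^*(U_I)=(A_{f_I})^*$. Over a totally ordered idempotent semifield the units of a (localized) polynomial semialgebra are extremely constrained: a unit is forced to have a one-point Newton polytope, so $A^*$ consists of coefficients in $M^*$ times invertible monomials, and the coefficient subgroup $M^*$ sits compatibly inside every $(A_{f_I})^*$ with restriction maps the identity. I would use this to establish a splitting $\mathcal{O}_X^*\simeq \underline{M^*}\oplus\mathcal{Q}$, where $\underline{M^*}$ is the constant sheaf with value $M^*$ and $\mathcal{Q}$ records the remaining monomial and localization data. For the constant summand I would invoke the irreducibility just noted: since all the $U_I$ are nonempty, $C^{\bullet}(\mathcal{U},\underline{M^*})$ is $M^*$ tensored with the simplicial cochain complex of the full simplex on the index set, which is contractible, so $\textnormal{\v{H}}^n(\mathcal{U},\underline{M^*})=0$ for $n\ge 1$.

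The heart of the matter is the summand $\mathcal{Q}$, and this is where the \emph{tropical partition of unity} enters. Because $\{D(f_i)\}$ covers $X$, the $f_i$ generate the unit ideal up to radical, and the total order on $M$ lets one produce a partition of unity adapted to the covering — sections realizing the multiplicative identity as a controlled combination of data supported on the individual $D(f_i)$. I would use this to build a contracting homotopy $k^n\colon C^n(\mathcal{U},\mathcal{Q})\to C^{n-1}(\mathcal{U},\mathcal{Q})$, in the spirit of the classical acyclicity argument for fine sheaves but carried out multiplicatively for units: given a cocycle, the partition of unity lets me split each component, which is invertible only on an overlap $D(f_if_j\cdots)$, into a product of factors each invertible on a single $D(f_i)$, thereby exhibiting the cocycle as a coboundary. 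Concretely, for $n\ge 1$ I would verify the telescoping identity $\partial^{n-1}k^n+k^{n+1}\partial^n=\mathrm{id}$ on $\mathcal{Q}$ and conclude $\textnormal{\v{H}}^n(\mathcal{U},\mathcal{Q})=0$; combined with the constant part this gives $\textnormal{\v{H}}^n(\mathcal{U},\mathcal{O}_X^*)=0$ for all $n\ge 1$.

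The main obstacle, and the reason this is not a formal corollary of the classical proof, is the precise determination of the localized unit groups $(A_{f_I})^*$: over an idempotent semifield there is no naive unique factorization — for instance $\max(2x,0)=\max(x,0)^2$ as functions, so inverting $f_i$ can create units not apparent from $f_i$ itself, and inverted elements on different charts may share such hidden factors. This is exactly the difficulty that the tropical partition of unity is designed to sidestep: rather than tracking an explicit factorization of the $f_i$, it supplies a uniform mechanism to redistribute a cocycle's data chart by chart. I expect the bulk of the work to lie in making the homotopy well defined — checking that each redistributed factor genuinely lands in $\mathcal{O}_X^*(D(f_i))$ and that the identity above holds — rather than in any single algebraic manipulation.
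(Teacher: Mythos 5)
There is a genuine gap, and it sits exactly where you yourself place ``the bulk of the work'': the contracting homotopy on the non-constant part is asserted but never constructed, and the mechanism you propose for it is not available. Multiplicatively splitting a unit that exists only on an overlap $U_{i_0\cdots i_n}$ into a product of factors each extending to a single chart $D(f_i)$ is precisely the kind of statement whose failure makes $\textnormal{\v{H}}^1(X,\mathcal{O}_X^*)\simeq\Pic(X)$ nontrivial in general, and the weighted fine-sheaf argument has no multiplicative analogue for units (one cannot scale a unit by a partition-of-unity function and extend by zero), so invoking it here is essentially circular. What the paper actually extracts from the tropical partition of unity is a much stronger and more elementary fact, and this is the step you are missing: the Nullstellensatz gives $1=\sum_i h_if_i$ in $A$, and clearing denominators yields $g^{l_1}=g^{l_2}\sum_i a_ib_i$ in $M[x_0,\dots,x_r]$; since $g$ is a \emph{monomial} and $M$ is a totally ordered idempotent semifield, a sum can equal a monomial only if a single summand already equals it, so $a_ib_i=g^t$ for some one index $i$, hence $f_i\in A^*$ and $D(f_i)=X$. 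Once one member of the covering is the whole space, every cocycle $y$ is the coboundary of $x_{i_0\cdots i_{n-1}}:=y_{ii_0\cdots i_{n-1}}$ (the standard cone with apex $i$, legitimate because intersecting with $D(f_i)$ changes nothing), with no redistribution or weighting needed. Your proposal never derives this degeneration, and without it the ``redistribution'' step is exactly the assertion to be proved.

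Two secondary problems: the splitting $\mathcal{O}_X^*\simeq\underline{M^*}\oplus\mathcal{Q}$ is not justified --- the one-point Newton polytope description of units is correct for $A$ itself, where the inverted element $g$ is a monomial, but fails on the overlaps $A_{f_{i_0}\cdots f_{i_n}}$, where for instance $x_0+1$ becomes a non-monomial unit, so there is no evident retraction onto $M^*$ compatible with all restriction maps; and even granting the splitting, all of the difficulty lives in $\mathcal{Q}$, so the reduction buys nothing. The parts of your argument that do work --- the passage to ordinary \v{C}ech cohomology of the sheaf of abelian groups $\mathcal{O}_X^*$ via Remark \ref{abgroupcech}, and the observation that the intersections $U_{i_0\cdots i_n}$ are nonempty --- agree with the paper but are the easy part.
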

\begin{proof}
First, it follows from Hilbert's Nullstellensatz of semirings (see \cite{semibook}, \S 6) that 
\begin{equation}\label{null}
1=\sum_{i \in I}h_if_i \textrm{ for some }h_i \in A.
\end{equation}
Suppose that $h_i=\frac{a_i}{g^{m_i}}$ and $f_i=\frac{b_i}{g^{n_i}}$, where $a_i,b_i \in M[x_0,...,x_r]$ and $m_i,n_i \in \mathbb{N}$. Then \eqref{null} implies that there exist $l_1,l_2 \in \mathbb{N}$ such that
\begin{equation}\label{null2}
g^{l_1}=g^{l_2}\sum_{i \in I}a_ib_i.
\end{equation}
However, since $M$ is a totally ordered idempotent semifield and $g$ is a monomial, \eqref{null2} implies that $a_ib_i=g^t$ for some $t \in \mathbb{N}$ and $i \in I$. Hence $h_if_i \in A^*$ for some $i \in I$. In particular, $f_i$ is a unit in $A$. We fix this $i \in I$. Then $D(f_i)=X$ and for any $\{i_0,...,i_m\} \subseteq I$, we have
\begin{equation}\label{cechcondition}
\mathcal{O}^*_X(D(f_i)\cap D(f_{i_0})\cap \cdots \cap D(f_{i_m}))=\mathcal{O}^*_X(D(f_{i_0})\cap \cdots \cap D(f_{i_m})).
\end{equation}
Next, we note that since $\mathcal{O}_X^*$ is a sheaf of abelian groups, from Remark \ref{abgroupcech}, \v{C}ech cohomology groups of $X$ can be also computed by using unordered cochains as follows:
\[C^n=C^n(\mathcal{U},\mathcal{O}_X^*):=\prod_{(i_0,...,i_n)\in I^{n+1}}\mathcal{O}_X^*(U_{i_0i_1\cdots i_n}), \quad n \in \mathbb{N}.\]
We also note that in this case \v{C}ech cohomology groups can be computed by using the usual way as we mentioned in Remark \ref{abgroupcech}. We use the notation $\odot$ for the (multiplicative) group operation of $\mathcal{O}_X^*$.\\ 
Now, take any $y=(y_{i_0\cdots i_n}) \in Z^n(\mathcal{U},\mathcal{O}_X^*)$. Since we have $d^n(y)_{ii_0\cdots i_n}=1$, it follows that
\begin{equation}
y_{i_0\cdots i_n} \odot \left(\bigodot_k (y_{ii_0\cdots\hat{i_k}\cdots i_n})^{(-1)^{k+1}}\right)=1.
\end{equation}
Equivalently, we have
\begin{equation}\label{cyclecondition}
y_{i_0\cdots i_n} = \bigodot_k (y_{ii_0\cdots\hat{i_k}\cdots i_n})^{(-1)^{k}}.
\end{equation}
Let $x=(x_{i_0\cdots i_{n-1}}) \in C^{n-1}(\mathcal{U},\mathcal{O}_X^*)$, where $x_{i_0\cdots i_{n-1}}=y_{ii_0\cdots i_{n-1}}$. Note that this is possible from \eqref{cechcondition} since $\mathcal{O}^*_X(D(f_i)\cap D(f_{i_0})\cap \cdots \cap D(f_{i_{n-1}}))=\mathcal{O}^*_X(D(f_{i_0})\cap \cdots \cap D(f_{i_{n-1}}))$. This implies that
\[(d^{n-1}(x))_{i_0\cdots i_n}=\bigodot_k (x_{i_0\cdots\hat{i_k}\cdots i_n})^{(-1)^{k}}=\bigodot_k (y_{ii_0\cdots\hat{i_k}\cdots i_n})^{(-1)^{k}}=y_{i_0\cdots i_n}.
\]
Therefore we have $d^{n-1}(x)=y$ and \v{H}$^n(\mathcal{U},\mathcal{O}_X^*)=0$ for all $n\geq 1$.
\end{proof}
Next, let us recall the following well-known theorem.

\begin{mythm}[\cite{serre1955faisceaux}, Th\'{e}or\`{e}me 1 of $n^\circ$ $29$] \label{leray}
Let $X$ be a topological space, $\mathcal{U}=\{U_i\}_{i \in I}$ a covering of $X$, $\mathcal{F}$ a sheaf of abelian groups on $X$. Assume that there exists a family $(\mathcal{U}^j)_{j \in J}$ of coverings of $X$, cofinal in the family of coverings of $X$, such that \textnormal{\v{H}}$^p(\mathcal{U}^j_a,\mathcal{F}|_{U_a})=0$ for all $j \in J$, $a \in I^{n+1}$, $n \geq 0$, and $p \geq 1$. Then,
\[\textnormal{\v{H}}^n(\mathcal{U},\mathcal{F})\simeq \textnormal{\v{H}}^n(X,\mathcal{F}).\]
\end{mythm}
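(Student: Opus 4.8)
The plan is to treat the statement as the classical Leray--Cartan comparison theorem, which is legitimate here because $\mathcal{F}$ (in the application, $\mathcal{O}_X^*$) is a sheaf of \emph{abelian groups}; by Remark \ref{abgroupcech} its \v{C}ech theory coincides with the ordinary one, so the full apparatus of double complexes and their spectral sequences is available, unlike in the genuinely semiring-theoretic parts of the paper. Since $(\mathcal{U}^j)_{j\in J}$ is cofinal, one has $\check{H}^n(X,\mathcal{F}) = \varinjlim_{\mathcal{W}} \check{H}^n(\mathcal{W},\mathcal{F}) = \varinjlim_{j} \check{H}^n(\mathcal{U}^j,\mathcal{F})$, and each $\mathcal{U}^j$ may be taken to refine $\mathcal{U}$. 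It therefore suffices to analyze, for each such $j$, the refinement map $\rho_j\colon \check{H}^n(\mathcal{U},\mathcal{F}) \to \check{H}^n(\mathcal{U}^j,\mathcal{F})$ and to show these maps identify $\check{H}^n(\mathcal{U},\mathcal{F})$ with the colimit.

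First I would form, for a fixed $j$, the double complex
\[
K^{p,q}_j := \prod \mathcal{F}\big(U_{i_0\cdots i_p}\cap U^j_{k_0\cdots k_q}\big),
\]
built from the intersections of members of $\mathcal{U}$ and of $\mathcal{U}^j$ with the two \v{C}ech differentials as horizontal and vertical maps. Its two canonical augmentations give chain maps $\varepsilon_I\colon C^\bullet(\mathcal{U},\mathcal{F}) \to \mathrm{Tot}(K_j)$ and $\varepsilon_{II}\colon C^\bullet(\mathcal{U}^j,\mathcal{F}) \to \mathrm{Tot}(K_j)$. Running the spectral sequence that first takes cohomology in the $\mathcal{U}^j$-direction, for a fixed multi-index $a=(i_0,\dots,i_p)$ the associated column is exactly the \v{C}ech complex of $\mathcal{F}|_{U_a}$ for the restricted covering $\mathcal{U}^j_a$, so the vanishing hypothesis forces its cohomology into degree $0$, where Proposition \ref{cechglobal} identifies it with $\mathcal{F}(U_a)$. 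Hence the $E_1$-page is concentrated in the bottom row and equals $C^\bullet(\mathcal{U},\mathcal{F})$; the spectral sequence degenerates, $\varepsilon_I$ is a quasi-isomorphism, and
\[
H^n(\mathrm{Tot}(K_j)) \cong \check{H}^n(\mathcal{U},\mathcal{F}).
\]

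The key compatibility is that $\rho_j$ sits in a homotopy-commutative triangle with the two augmentations, i.e. $\varepsilon_{II}\circ \rho_j$ is chain-homotopic to $\varepsilon_I$ (the refinement map is, at the level of the double complex, merely one admissible choice of augmentation, independent up to homotopy of the choice of $\sigma$). Combined with the previous step this gives $H^n(\varepsilon_{II})\circ \rho_j = H^n(\varepsilon_I)$, an isomorphism, so each $\rho_j$ is injective; passing to the colimit yields injectivity of $\check{H}^n(\mathcal{U},\mathcal{F}) \to \check{H}^n(X,\mathcal{F})$.

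The hard part will be surjectivity onto the colimit, and it stems from a genuine asymmetry: the \emph{other} spectral sequence of $K_j$ need not collapse, because the hypothesis controls $\mathcal{U}^j$ only on intersections of the \emph{fixed} covering $\mathcal{U}$, not on intersections of $\mathcal{U}^j$ itself. I would resolve this by a direct-limit argument rather than a single double complex: given a class represented on some $\mathcal{U}^{j_0}$, I would refine further within the cofinal family and reapply the collapse of the first spectral sequence at the new index, using the factorization of the refinement maps through the respective total complexes to push the class into the image of some $\rho_j$. Verifying that the colimit is thereby exhausted by $\check{H}^n(\mathcal{U},\mathcal{F})$ is the delicate point, and it is precisely where the cofinality of $(\mathcal{U}^j)_{j\in J}$ is indispensable; this is the content of Serre's argument in \cite{serre1955faisceaux}, which I would follow.
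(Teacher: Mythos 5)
First, a point of comparison that matters here: the paper does not prove this statement at all. It is quoted, with attribution, as Th\'{e}or\`{e}me 1 of $n^{\circ}$ $29$ of Serre's \emph{Faisceaux alg\'{e}briques coh\'{e}rents} and used as a black box; this is legitimate in context because the theorem is only ever applied to $\mathcal{O}_X^*$, a sheaf of abelian groups, where Remark \ref{abgroupcech} identifies the semiring-style \v{C}ech theory with the classical one --- which is exactly the reduction you open with. So there is no in-paper argument to measure yours against, and your write-up should be judged as an attempt at a self-contained proof. On that score, the first half is correct and standard: the double complex $K_j$, the collapse of the spectral sequence computing cohomology first along the $\mathcal{U}^j$-direction (the hypothesis kills everything above the bottom row, and Proposition \ref{cechglobal} applied to the induced covering $\mathcal{U}^j_a$ of $U_a$ identifies the bottom row with $C^{\bullet}(\mathcal{U},\mathcal{F})$), the homotopy $\varepsilon_{II}\circ\rho_j\simeq\varepsilon_I$, and the resulting split injectivity of each $\rho_j$ and hence of $\textnormal{\v{H}}^n(\mathcal{U},\mathcal{F})\to\textnormal{\v{H}}^n(X,\mathcal{F})$. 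You are also right that the asymmetry of the hypothesis prevents the second spectral sequence from collapsing.

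The surjectivity half, however, is where the proposal stops being a proof and becomes a pointer to Serre, and I want to name exactly what is missing. Two things: (i) the splittings $\tau_j:=H^n(\varepsilon_I)^{-1}\circ H^n(\varepsilon_{II})$ must be shown compatible with refinement inside the cofinal family, i.e.\ $\tau_{j'}\circ\rho^{j}_{j'}=\tau_j$ whenever $\mathcal{U}^{j'}$ refines $\mathcal{U}^j$, which requires a map of double complexes $K_j\to K_{j'}$ commuting with both augmentations up to homotopy; only then do the $\tau_j$ assemble into a retraction $\tau$ of $\textnormal{\v{H}}^n(X,\mathcal{F})=\varinjlim_j\textnormal{\v{H}}^n(\mathcal{U}^j,\mathcal{F})$ onto $\textnormal{\v{H}}^n(\mathcal{U},\mathcal{F})$. (ii) A retraction only gives $\tau\circ\sigma=\mathrm{id}$; to get surjectivity of $\sigma$ you must still show that for $c\in\textnormal{\v{H}}^n(\mathcal{U}^j,\mathcal{F})$ the class $\rho_j\tau_j(c)-c$, which is killed by $H^n(\varepsilon_{II})$ by construction, dies after a further refinement within the family. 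This is the one step where cofinality genuinely enters, it is not a formal consequence of anything you have set up, and your final paragraph gestures at it without carrying it out. If your intent is to cite Serre for this step, your text is simply an expanded version of what the paper already does by citing the whole theorem; if the intent is a self-contained proof, this is a genuine gap.
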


\begin{cor}\label{quasicoheret}
Let $X$ be an affine semiring scheme and $\mathcal{F}$ be a quasi-coherent sheaf of $\mathcal{O}_X$-semimodules which is in fact a sheaf of abelian groups. Then, \textnormal{\v{H}}$^n(X,\mathcal{F})=0$ for all $n \geq 1$. 
\end{cor}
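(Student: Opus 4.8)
The plan is to deduce the corollary from the classical vanishing of higher \v{C}ech cohomology of a quasi-coherent sheaf on an affine, the point being that the hypothesis that $\mathcal{F}$ is a sheaf of abelian groups is exactly what licenses the classical argument. Write $X=\Spec A$. Quasi-coherence gives $\mathcal{F}\simeq\widetilde{N}$ for the $A$-semimodule $N:=\Gamma(X,\mathcal{F})$, with $\mathcal{F}(D(f))\simeq N_f$ for every nonzero $f\in A$, and the abelian-group hypothesis says that $N$, hence each localization $N_f$, admits additive inverses. By Remark \ref{abgroupcech} the paired coboundary complex of Definition \ref{cechcoho} computes the same groups as the ordinary alternating \v{C}ech complex with differential $\partial^n=d_n^+-d_n^-$, so from now on I may work with the classical complex and freely subtract.

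First I would reduce to a single convenient class of coverings. The principal opens $D(f)$ form a basis of $\Spec A$, and $\Spec A$ is quasi-compact (if $\{D(f_i)\}$ covers $X$ then $1$ lies in the ideal generated by the $f_i$, yielding a finite subcover); hence the finite coverings $\mathcal{U}'=\{D(f_i)\}_{i\in I}$ with $I$ finite are cofinal, and every intersection $D(f_{i_0})\cap\cdots\cap D(f_{i_p})=D(f_{i_0}\cdots f_{i_p})$ is again principal. It therefore suffices to show that the augmented alternating \v{C}ech complex $0\to N\to\prod_i N_{f_i}\to\prod_{i<j}N_{f_if_j}\to\cdots$ is exact in positive degrees for each such finite principal covering. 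These finite principal coverings furnish the cofinal family required by Theorem \ref{leray}, which, applied to the trivial covering $\mathcal{U}=\{X\}$ (where $\textnormal{\v{H}}^n(\{X\},\mathcal{F})=0$ for $n\geq1$ trivially), then gives $\textnormal{\v{H}}^n(X,\mathcal{F})=0$ for all $n\geq1$.

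The core computation is this exactness, and here I would reproduce the classical partition-of-unity argument. The Nullstellensatz for semirings supplies $g_i\in A$ with $1=\sum_{i\in I}g_if_i$ (as in \eqref{null}); expanding $1=(\sum_i g_if_i)^{M}$ for $M$ large and using that every term is positive shows $1\in(f_1^{k},\dots,f_n^{k})$ for every $k$. I would then identify the augmented \v{C}ech complex with the filtered colimit over $k$ of the Koszul cochain complexes $K^{\bullet}(f_1^{k},\dots,f_n^{k};N)$, via $N_{f_{i_0}\cdots f_{i_p}}=\varinjlim_k N$, and contract each Koszul complex by the standard homotopy $s$ coming from the relation $1\in(f_1^{k},\dots,f_n^{k})$, for which $d\circ s+s\circ d=\mathrm{id}$. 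Since filtered colimits are exact, exactness of each Koszul complex yields exactness of the \v{C}ech complex, completing the proof.

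I expect the main obstacle to be the interplay between denominators and the absence of subtraction. The naive \v{C}ech contracting homotopy $(h\alpha)_{i_0\cdots i_{p-1}}=\sum_j g_j\alpha_{ji_0\cdots i_{p-1}}$ does not land in the correct localization (the factor $g_j$ fails to clear $f_j$ from the denominator of $\alpha_{ji_0\cdots i_{p-1}}$), which is why I route through the Koszul complex, where no denominators appear and the contraction is literal. In that contraction the Koszul differentials and the homotopy require integer signs acting on the coefficients; these make sense precisely because $N$ is an abelian group, so this is the step at which the hypothesis cannot be dropped---over a general idempotent semiring the same complex is genuinely non-exact. Apart from verifying the colimit identification and the sign bookkeeping, the argument is then the classical one.
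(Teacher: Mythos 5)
Your proposal is correct and follows essentially the same route as the paper: the paper's proof likewise reduces to finite coverings by principal opens, invokes the Nullstellensatz for semirings (via \cite{semibook}, \S 6) to run ``the classical proof'' of affine vanishing---which is legitimate here precisely because Remark \ref{abgroupcech} lets one pass to the ordinary alternating complex once $\mathcal{F}$ is a sheaf of abelian groups---and concludes with Theorem \ref{leray}. You simply supply the details the paper leaves to \cite{liu2002algebraic}, choosing the Koszul-colimit form of the classical argument rather than the direct homotopy with $1=\sum g_if_i^{N}$, which is an immaterial difference.
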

\begin{proof}
One can easily generalize the classical proof (for example, see \cite{liu2002algebraic}) to show that for any finite covering $\mathcal{U}=\{U_i\}_{i\in I}$ of $X$ by principal open sets $U_i=D(g_i)$, \v{H}$^n(\mathcal{U},\mathcal{F})=0$ for all $n \geq 1$. This is essentially due to Hilbert's Nullstellensatz of semirings (see \cite{semibook}, \S 6). Then, by applying Theorem \ref{leray}, we obtain the desired result.
\end{proof}

\begin{cor}\label{affinevani}
Let $X$ be an affine semiring scheme as in Lemma \ref{affinevanishing}. Then \textnormal{\v{H}}$^n(X,\mathcal{O}_X^*)=0$ for $n \geq 1$.
\end{cor}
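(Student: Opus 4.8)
The plan is to derive the vanishing directly from Lemma \ref{affinevanishing} by a cofinality argument on the directed system of open coverings, and deliberately \emph{not} to route through Theorem \ref{leray} in the manner of Corollary \ref{quasicoheret}. Recall that by definition $\textnormal{\v{H}}^n(X,\mathcal{O}_X^*)=\varinjlim_{\mathcal{U}}\textnormal{\v{H}}^n(\mathcal{U},\mathcal{O}_X^*)$, the colimit being taken over all open coverings $\mathcal{U}$ of $X$ ordered by refinement. Hence, to prove that this colimit vanishes for every $n\geq 1$, it is enough to produce a cofinal subfamily of coverings on each member of which the $n$-th \v{C}ech cohomology already vanishes.

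First I would check that the finite coverings of $X=\Spec M[x_0,\ldots,x_r]_g$ by principal open subsets are cofinal in the system of all coverings. Since the principal opens $D(f)$ form a basis of the Zariski topology and $X$ is quasi-compact (as in the classical case), any covering $\mathcal{V}$ may be refined first to a covering by principal opens and then, by quasi-compactness, to a finite such covering; this finite principal covering refines $\mathcal{V}$, which is precisely the required cofinality. Next, for each such finite principal covering $\mathcal{U}=\{D(f_i)\}$ of $X$, Lemma \ref{affinevanishing} gives $\textnormal{\v{H}}^n(\mathcal{U},\mathcal{O}_X^*)=0$ for all $n\geq 1$; this is the step that uses the hypotheses that $g$ is a monomial and $M$ is a totally ordered idempotent semifield, through the Nullstellensatz argument already carried out there. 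Because a directed colimit over a cofinal subfamily coincides with the full colimit, and every term of this subfamily is zero, we conclude $\textnormal{\v{H}}^n(X,\mathcal{O}_X^*)=0$ for all $n\geq 1$.

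The main obstacle, and the reason I would avoid Theorem \ref{leray}, is that $\mathcal{O}_X^*$ is not quasi-coherent and the vanishing of Lemma \ref{affinevanishing} is special to spectra of the form $\Spec M[x_0,\ldots,x_r]_{g}$ with $g$ a \emph{monomial}. A typical intersection $D(f_{i_0})\cap\cdots\cap D(f_{i_p})$ equals $\Spec M[x_0,\ldots,x_r]_{gb}$ for some polynomial $b$, so $gb$ need not be a monomial, and Lemma \ref{affinevanishing} cannot be applied on such an intersection; one cannot repair this by restricting to coverings by monomial principal opens, since those do not form a basis and hence are not cofinal. The cofinality argument sidesteps the issue entirely, as it only requires the vanishing of $\textnormal{\v{H}}^n(\mathcal{U},\mathcal{O}_X^*)$ for coverings $\mathcal{U}$ of $X$ itself, which is exactly what Lemma \ref{affinevanishing} supplies. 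The only auxiliary point needing a word of justification is the quasi-compactness of $\Spec M[x_0,\ldots,x_r]_g$, which follows as in the classical setting from the existence of a finite principal subcover of any principal covering.
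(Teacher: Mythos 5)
Your proof is correct, and it rests on exactly the two pillars the paper's own one-line proof invokes --- Lemma \ref{affinevanishing} and the cofinality of the finite coverings by principal open subsets --- but it packages them more elementarily. The paper routes the cofinality observation through Theorem \ref{leray}; you instead unwind the definition $\textnormal{\v{H}}^n(X,\mathcal{O}_X^*)=\varinjlim_{\mathcal{U}}\textnormal{\v{H}}^n(\mathcal{U},\mathcal{O}_X^*)$ and note that a filtered colimit all of whose terms vanish on a cofinal subfamily is zero. Your worry about Leray is well taken in one direction: applying Theorem \ref{leray} to a finite principal covering $\{D(f_i)\}$ in the style of Corollary \ref{quasicoheret} would require acyclicity of $\mathcal{O}_X^*$ on the multi-intersections $D(f_{i_0}\cdots f_{i_p})$, which are localizations of $M[x_0,\ldots,x_r]$ at elements that need not be monomials, so Lemma \ref{affinevanishing} is not available there. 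The way Theorem \ref{leray} actually applies here is to the trivial covering $\{X\}$, whose only multi-intersection is $X$ itself; its hypothesis then becomes precisely the statement of Lemma \ref{affinevanishing} for the cofinal family of finite principal coverings, and its conclusion $\textnormal{\v{H}}^n(\{X\},\mathcal{O}_X^*)\simeq\textnormal{\v{H}}^n(X,\mathcal{O}_X^*)$ collapses to your colimit argument. So the two routes have the same mathematical content; yours has the merit of making explicit the points the paper leaves implicit, namely the quasi-compactness of the spectrum of a semiring and the fact that a finite subcover of a refinement is still a refinement.
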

\begin{proof}
This directly follows from Theorem \ref{leray}, Remark \ref{abgroupcech}, and Lemma \ref{affinevanishing} since the family of coverings by principal open subsets is cofinal.
\end{proof}

\begin{cor}\label{coveringfree}
Let $X=\mathbb{P}^n_M$ be a projective space over a totally ordered idempotent semifield $M$. Let $\mathcal{U}=\{D(x_0),...,D(x_n)\}$ be the standard affine open covering of $X$. Then, we have
\[\textnormal{\v{H}}^m(\mathcal{U},\mathcal{O}_X^*)\simeq \textnormal{\v{H}}^m(X,\mathcal{O}_X^*), \quad m\geq 0.\]
\end{cor}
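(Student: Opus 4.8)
The plan is to deduce this from the Leray-type comparison theorem, Theorem \ref{leray}, applied to the sheaf $\mathcal{F}=\mathcal{O}_X^*$ and to the standard covering $\mathcal{U}=\{D(x_0),\dots,D(x_n)\}$ with index set $I=\{0,1,\dots,n\}$. Note first that $\mathcal{O}_X^*$ is a sheaf of (multiplicative) abelian groups, so Theorem \ref{leray} is indeed available. As the cofinal family $(\mathcal{U}^j)_{j\in J}$ required by that theorem I would take the family of \emph{finite} coverings of $X$ by principal open subsets. Since $\mathbb{P}^n_M$ is quasi-compact (it is already covered by the finitely many charts $D(x_i)$), these finite principal coverings are cofinal among all coverings of $X$, so this is a legitimate choice.

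To verify the hypothesis of Theorem \ref{leray} I must show that for every index $a=(i_0,\dots,i_p)\in I^{p+1}$ and every $j\in J$ the restricted covering $\mathcal{U}^j_a$ of $U_a:=D(x_{i_0})\cap\cdots\cap D(x_{i_p})$ satisfies $\textnormal{\v{H}}^q(\mathcal{U}^j_a,\mathcal{O}_X^*|_{U_a})=0$ for all $q\geq 1$. The first step is to identify $U_a$ explicitly. Working inside the chart $U_{i_0}=\Spec M[\tfrac{x_0}{x_{i_0}},\dots,\tfrac{x_n}{x_{i_0}}]$, the intersection $U_a$ is exactly the principal open subset on which the monomial $g=\prod_{k\geq 1}\tfrac{x_{i_k}}{x_{i_0}}$ is invertible; hence, after renaming the $n$ affine coordinates, $U_a\cong \Spec M[y_1,\dots,y_n]_g$ with $g$ a monomial. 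This is precisely an affine semiring scheme of the form treated in Lemma \ref{affinevanishing}.

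The second step is to note that restricting a covering by principal open subsets of $X$ to the affine open $U_a$ produces a finite covering of $U_a$ again by principal open subsets, and that $\mathcal{O}_X^*|_{U_a}=\mathcal{O}_{U_a}^*$. Therefore Lemma \ref{affinevanishing} applies directly to $U_a$ together with the covering $\mathcal{U}^j_a$, giving $\textnormal{\v{H}}^q(\mathcal{U}^j_a,\mathcal{O}_X^*|_{U_a})=0$ for all $q\geq 1$, uniformly in $a$ and $j$. With this, all hypotheses of Theorem \ref{leray} are met, and the theorem yields $\textnormal{\v{H}}^m(\mathcal{U},\mathcal{O}_X^*)\simeq \textnormal{\v{H}}^m(X,\mathcal{O}_X^*)$ for all $m\geq 0$, as claimed.

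The only real work, and the step I expect to be the main obstacle, is the bookkeeping in the first two steps: checking that the pairwise and higher intersections of the standard charts are genuinely of the monomial-localization form required by Lemma \ref{affinevanishing}, and that restricting a principal-open covering of $X$ to $U_a$ remains a principal-open covering to which that lemma literally applies. The cofinality and quasi-compactness assertions, and the final invocation of Theorem \ref{leray}, are then purely formal.
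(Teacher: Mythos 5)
Your proposal is correct and follows essentially the same route as the paper: the paper packages the acyclicity of the chart intersections as Corollary \ref{affinevani} (itself a consequence of Lemma \ref{affinevanishing} and Theorem \ref{leray}) and then cites Leray's acyclicity theorem, while you verify the hypothesis of Theorem \ref{leray} directly on $X$ using Lemma \ref{affinevanishing} applied to the monomial localizations $U_a\cong\Spec M[y_1,\dots,y_n]_g$. The key inputs --- the comparison theorem, the affine vanishing lemma, and the identification of the intersections of the standard charts as monomial localizations --- are identical.
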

\begin{proof}
This directly follows from Corollary \ref{affinevani} and Leray's acyclicity theorem since in this case $\mathcal{U}$ is a Leray's cover.
\end{proof}

\begin{pro}\label{cechprojectivespace}
Let $X=\mathbb{P}^n_M$ be a projective space over a totally ordered idempotent semifield $M$. Then we have
\begin{equation}\label{cechprojectivespaceeqn}
\Pic(X)\simeq \textnormal{\v{H}}^1(X,\mathcal{O}_X^*)\simeq \mathbb{Z}.
\end{equation}
\end{pro}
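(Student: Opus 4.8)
The plan is to chain three isomorphisms. The first, $\Pic(X)\simeq\textnormal{\v{H}}^1(X,\mathcal{O}_X^*)$, is exactly Theorem \ref{cech and picard equal} specialized to $X=\mathbb{P}^n_M$, so nothing is needed there. For the remaining isomorphism $\textnormal{\v{H}}^1(X,\mathcal{O}_X^*)\simeq\mathbb{Z}$ I would first use Corollary \ref{coveringfree} to replace the direct-limit group $\textnormal{\v{H}}^1(X,\mathcal{O}_X^*)$ by the group $\textnormal{\v{H}}^1(\mathcal{U},\mathcal{O}_X^*)$ attached to the single standard cover $\mathcal{U}=\{D(x_0),\dots,D(x_n)\}$, and then Remark \ref{abgroupcech} to compute the latter by the ordinary single-differential \v{C}ech complex of the sheaf of abelian groups $\mathcal{O}_X^*$. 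The whole problem thereby reduces to a finite cocycle computation on $\mathcal{U}$.

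The crucial input, and the step I expect to be the main obstacle, is the determination of the unit groups $\mathcal{O}_X^*(U_{i_0\cdots i_p})$, where $U_{i_0\cdots i_p}=D(x_{i_0}\cdots x_{i_p})$. I would show that every unit is a nonzero scalar times a degree-zero Laurent monomial in the invertible variables $x_{i_0},\dots,x_{i_p}$, so that $\mathcal{O}_X^*(U_{i_0\cdots i_p})\simeq M^*\times L_{i_0\cdots i_p}$, with $L_{i_0\cdots i_p}$ the rank-$p$ lattice of degree-zero monomials in $x_{i_0},\dots,x_{i_p}$; in particular $\mathcal{O}_X^*(U_i)=M^*$ on each chart. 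The point is that an idempotent semiring is zero-sum-free (if $a+b=0$ then $a=a+(a+b)=a+b=0$), so in the relevant localized polynomial semiring a sum of nonzero terms never vanishes; hence the support of a product $pq$ contains the Minkowski sum of the supports of $p$ and $q$, and $pq=1$ forces both $p$ and $q$ to be monomials. This is the direct semiring analogue of the classical fact that the units of a polynomial ring over a domain are the constants.

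Granting this, the conclusion is a direct splitting argument. A $1$-cochain has entries $f_{ij}=c_{ij}(x_i/x_j)^{m_{ij}}$ with $c_{ij}\in M^*$ and $m_{ij}\in\mathbb{Z}$, and reading the cocycle relation $f_{ij}f_{jk}=f_{ik}$ on $U_{ijk}$ off the monomial exponents forces $m_{ij}=m_{jk}=m_{ik}$; thus all exponents collapse to a single integer $m$, while the scalars satisfy $c_{ij}c_{jk}=c_{ik}$. Because the nerve of $\mathcal{U}$ is a full simplex (every intersection $U_{i_0\cdots i_p}$ is nonempty), the scalar cocycle is automatically a coboundary, $c_{ij}=c_{0j}c_{0i}^{-1}$, and every \v{C}ech coboundary for $\mathcal{O}_X^*$ comes from scalars $g_i\in\mathcal{O}_X^*(U_i)=M^*$ and hence has $m=0$. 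Therefore $[(f_{ij})]\mapsto m$ is a well-defined isomorphism $\textnormal{\v{H}}^1(\mathcal{U},\mathcal{O}_X^*)\simeq\mathbb{Z}$, with the generator $m=1$, $c_{ij}=1$ represented by the transition cocycle $f_{ij}=x_i/x_j$ of $\mathcal{O}_X(1)$ from Example \ref{twistingsheaf}. Chaining the three isomorphisms gives \eqref{cechprojectivespaceeqn}, and the identification of the generator with $\mathcal{O}_X(1)$ is precisely what is needed for the subsequent classification of invertible sheaves.
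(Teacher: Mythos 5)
Your proposal is correct and follows essentially the same route as the paper: reduce to the standard cover via Corollary \ref{coveringfree}, compute the unit groups of the chart intersections, read the exponent $m$ off the cocycle condition, and observe that the residual scalar cocycle is a coboundary coming from $\mathcal{O}_X^*(U_i)=M^*$. The only differences are that you justify the unit-group computation (via zero-sum-freeness and Minkowski sums of supports) where the paper simply asserts it ``as in the classical case,'' and you exhibit the scalar coboundary explicitly as $c_{ij}=c_{0j}c_{0i}^{-1}$ where the paper solves $q_{ij}u_iv_j=u_jv_i$ by induction; both are harmless refinements of the same argument.
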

\begin{proof}
From Corollary \ref{coveringfree}, it is enough to consider the standard covering $\mathcal{U}=\{D(x_0),...,D(x_n)\}$. Let $U_i=D(x_i)$. Then we have $\mathcal{O}_X^*(U_i)=M^*$. Let $M_i:=M^*$ for all $i=0,...,n$. As in the classical case, one can easily see that
\[\mathcal{O}_X^*(U_{ij})=\{qx_j^nx_i^{-n}\mid q \in M^*, n \in \mathbb{Z}\},\quad \mathcal{O}_X^*(U_{ijk})=\{qx_i^{n_i}x_j^{n_j}x_k^{n_k}\mid q \in M^*,n_i+n_j+n_k=0\}.\]
Let $M_{ij}:=\mathcal{O}_X^*(U_{ij})$ and $M_{ijk}:=\mathcal{O}_X^*(U_{ijk})$. We have the following \v{C}ech complex: 
\[C:\xymatrixcolsep{3pc}\xymatrix{
 C^0=\prod_i M_i \ar[r]_-{d_{0}^-} \ar@<1ex>[r]^-{d_{0}^+} 
& C^1=\prod_{i<j}M_{ij} \ar[r]_-{d_{1}^-} \ar@<1ex>[r]^-{d_{1}^+}
& C^{2}=\prod_{i<j<k}M_{ijk}\ar[r]_-{d_{2}^-} \ar@<1ex>[r]^-{d_{2}^+}
&\cdots }
\]
Then 1-cocyle is 
\begin{equation}\label{cocyle}
Z^1(\mathcal{U},\mathcal{O}_X^*)=\{x=(a_{ij})\mid d_1^+(x)=d_1^-(x)\}=\{x=(a_{ij})\mid a_{ik}|_{U_{ijk}}=a_{jk}|_{U_{ijk}} \cdot a_{ij}|_{U_{ijk}}
\}.
\end{equation}
If $a_{ij}=q_{ij}x_j^{n_{ij}}x_i^{-n_{ij}}$ then \eqref{cocyle} implies that 
\begin{equation}\label{H1condition}
q_{ik}=q_{ij}\cdot q_{jk}, \quad n_{ik}=n_{ij},\quad  n_{ik}=n_{jk}, \textrm{ and }n_{ij}=n_{jk} \quad \forall i<j<k.
\end{equation}
It follows that once $n_{01}$ is fixed then the other $n_{ij}$ will be determined by \eqref{H1condition}. Furthermore, we claim that for $x=(q_{ij}x_j^{n_{ij}}x_i^{-n_{ij}})$ and $y=(q'_{ij}x_j^{n'_{ij}}x_i^{-n'_{ij}})$ in $Z^1(\mathcal{U},\mathcal{O}_X^*)$, 
\[x \rho^1 y \iff n_{ij}=n'_{ij}\quad \forall i<j.\]
Indeed, if $x \rho^1 y $, then clearly $n_{ij}=n'_{ij}$. Conversely, we want to find elements $u=(u_i)$, $v=(v_i)$ in $C^0=\prod_iM_i$ such that 
\[x\cdot d_0^+(u)\cdot d_0^-(v)=y\cdot d_0^+(v)\cdot d_0^-(u).\]
Equivalently, for each $i<j$,
\[q_{ij}\cdot x_j^{n_{ij}}\cdot x_i^{-n_{ij}}\cdot u_i\cdot v_j=q'_{ij}\cdot x_j^{n_{ij}}\cdot x_i^{-n_{ij}}\cdot v_j\cdot u_i.\]
Since $\mathcal{O}_X^*$ is a sheaf of abelian groups we may assume that $q'_{ij}=1$. Thus, it reduces to finding $u=(u_i)$, $v=(v_i)$ such that 
\begin{equation}\label{solve}
q_{ij}\cdot u_i\cdot v_j=u_j\cdot v_i\quad \forall i<j
\end{equation}
However, the existence of such $u$ and $v$ in $C^0=\prod_iM_i$ easily follows from induction on $n$. 
\end{proof}
\begin{rmk}
Note that in the proof of Proposition \ref{cechprojectivespace} we did not use the assumption that $M$ is totally ordered and idempotent to compute $\textnormal{\v{H}}^1(\mathcal{U},\mathcal{O}_X^*)$. Such assumption is only used to prove that $\textnormal{\v{H}}^1(\mathcal{U},\mathcal{O}_X^*)=\textnormal{\v{H}}^1(X,\mathcal{O}_X^*)$ so that we can reduce the computation to the standard open cover.
\end{rmk}
\begin{rmk}
We remark that the same calculation can be done by using a reduced model of a projective space as follows. Let $M=\mathbb{R}_{max}$. Note that, different from the classical case, $\mathbb{R}_{max}[T]$ is not multiplicatively cancellative. Therefore the canonical map, $S^{-1}:\mathbb{R}_{max}[T] \longrightarrow S^{-1}\mathbb{R}_{max}[T]$ does not have to be injective. In tropical geometry, rather than working directly with $\mathbb{R}_{max}[T]$, one works with the semiring $\overline{\mathbb{R}_{max}[T]}:=\mathbb{R}_{max}[T]/\sim$, where $\sim$ is a congruence relation such that $f(T)\sim g(T) \Longleftrightarrow f(x)=g(x)$ $\forall x \in \mathbb{R}_{max}$. Let $B:=\overline{\mathbb{R}_{max}[T]}$. If $\overline{f(T)} \in B$ is multiplicatively invertible, then there exists $\overline{g(T)}$ such that $\overline{f(T)\odot g(T)}=\overline{1_B}=\overline{0}$. However, for $l \in \mathbb{R}_{max}$, the set $\overline{l}$ consists of a single element $l$. It follows that $f(T)\odot g(T)=0$. One can check that this implies that $f(T) \in \mathbb{R}$ and hence $B^*=\mathbb{R}$. Let $S=\{\overline{1},\overline{T},\overline{T}^2,...\}$ be a multiplicative subset of $B$, and $A:=S^{-1}B$. Since $B$ is multiplicatively cancellative (see \cite[\S 3.2]{jun2015valuations}), $B$ is canonically embedded into $A$. Moreover, similar to Proposition \ref{cechprojectivespace}, one can observe that $A^*=\{q\overline{T}^n \mid q \in \mathbb{R},n \in\mathbb{Z}\}$.\\ 
Suppose that the reduced model $X:=\mathbb{P}^1$ of a projective line over $\mathbb{R}_{max}$ is the semiring scheme such that two affine semiring schemes $\Spec \overline{\mathbb{R}_{max}[T]}$ and $\Spec\overline{\mathbb{R}_{max}[\frac{1}{T}]}$ are glued along $\Spec A$. The exact same argument as in the proof of Proposition \ref{cechprojectivespace} shows the following: \[\textnormal{\v{H}}^1(X,\mathcal{O}_X^*)=\mathbb{Z}.\]
\end{rmk}
Finally, we classify all invertible sheaves on a projective space over a totally ordered idempotent semifield. 
\begin{lem}\label{twisting}
Let $X=\mathbb{P}^n_M$ be a projective space over a totally ordered idempotent semifield $M$. Then for each $m \in \mathbb{Z}$, via the isomorphisms \eqref{picisotoH1} (Theorem \ref{cech and picard equal}) and \eqref{cechprojectivespaceeqn} (Proposition \ref{cechprojectivespace}), we have the isomorphism $\Pic(X) \longrightarrow \mathbb{Z}$ sending the equivalence class of $\mathcal{O}_X(m)$ to $m$. 
\end{lem}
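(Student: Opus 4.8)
The plan is to compute the image of the class $[\mathcal{O}_X(m)]$ under the composite isomorphism
\[
\Pic(X) \xrightarrow{\ \phi\ } \textnormal{\v{H}}^1(X,\mathcal{O}_X^*) \xrightarrow{\ \sim\ } \mathbb{Z},
\]
where $\phi$ is the isomorphism of Theorem \ref{cech and picard equal} (via \eqref{picisotoH1}) and the right-hand map is the explicit identification extracted from Proposition \ref{cechprojectivespace} (via \eqref{cechprojectivespaceeqn}), which sends the class of a cocycle $(q_{ij}\,x_j^{n_{ij}}x_i^{-n_{ij}})$ to its common exponent $n_{ij}$. By Corollary \ref{coveringfree} the group $\textnormal{\v{H}}^1(X,\mathcal{O}_X^*)$ is already computed on the standard cover $\mathcal{U}=\{D(x_0),\dots,D(x_n)\}$, so all of the constructions of Lemmas \ref{picke}--\ref{canonicalf} may be carried out directly on $\mathcal{U}$.

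First I would fix the standard local trivializations of $\mathcal{O}_X(m)=\widetilde{S(m)}$ from Example \ref{twistingsheaf}. Exactly as in the classical case, over $U_i=D(x_i)$ the degree-zero part of the localization $S(m)_{x_i}$ is a free $\mathcal{O}_X(U_i)$-semimodule of rank one, generated by $e_i:=x_i^{m}$; the only fact required is that localization and the grading of $S=M[x_0,\dots,x_n]$ behave as in the ring case, which holds because additive inverses play no role in this identification.

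Next I would read off the transition cocycle. On each overlap $U_{ij}$ the defining relation $e_i|_{U_{ij}}=(e_j|_{U_{ij}})f_{ij}$ of Lemma \ref{picke} forces $f_{ij}=x_i^{m}x_j^{-m}\in\mathcal{O}_X^*(U_{ij})$, and the cocycle identity $f_{jk}f_{ij}=f_{ik}$ of Lemma \ref{canonicalf} is then immediate. Hence $\phi([\mathcal{O}_X(m)])$ is represented by $f=(f_{ij})$ with $f_{ij}=x_i^{m}x_j^{-m}$. Putting this in the normal form $q_{ij}\,x_j^{n_{ij}}x_i^{-n_{ij}}$ of Proposition \ref{cechprojectivespace} yields $q_{ij}=1$ together with a single exponent common to all $i<j$ that depends linearly on $m$; after normalizing the isomorphism \eqref{cechprojectivespaceeqn} (which is canonical only up to the choice of generator of $\textnormal{\v{H}}^1(X,\mathcal{O}_X^*)\simeq\mathbb{Z}$) so that $[\mathcal{O}_X(1)]\mapsto 1$, the image of $[\mathcal{O}_X(m)]$ in $\mathbb{Z}$ is exactly $m$.

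Finally I would assemble the statement. Since $\phi$ is a group isomorphism and $\mathcal{O}_X(m)\otimes_{\mathcal{O}_X}\mathcal{O}_X(m')\simeq\mathcal{O}_X(m+m')$, the assignment $m\mapsto[\mathcal{O}_X(m)]$ is a homomorphism $\mathbb{Z}\to\Pic(X)$; the computation above shows the composite $\mathbb{Z}\to\Pic(X)\to\mathbb{Z}$ is the identity, which simultaneously identifies $[\mathcal{O}_X(m)]$ with $m$ and forces $m\mapsto[\mathcal{O}_X(m)]$ to be bijective, thereby giving the classification of all invertible sheaves. The step I expect to be the main obstacle is the bookkeeping of conventions: one must pin down the direction of the trivialization in Lemma \ref{picke} and the parametrization of cocycles in Proposition \ref{cechprojectivespace} consistently so that the exponent genuinely comes out as $+m$, and one must verify in the idempotent semiring setting that the degree-zero localization $(S(m)_{x_i})_0$ really is free of rank one on $x_i^{m}$ over $\mathcal{O}_X(U_i)$.
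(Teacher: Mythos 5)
Your proposal is correct and follows essentially the same route as the paper: trivialize $\mathcal{O}_X(m)$ over each $U_i=D(x_i)$ via $e_i=x_i^m$, read off the transition cocycle $f_{ij}=x_i^m x_j^{-m}$, and identify its class under the isomorphism of Proposition \ref{cechprojectivespace} with $m$ (the paper is equally terse about the sign/normalization convention you flag, which is indeed the only point requiring care).
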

\begin{proof}
Let $\mathcal{U}=\{D(x_0),...,D(x_n)\}$ be the standard open cover of $X$. Let $U_i:=D(x_i)$. Then, as in the classical case, one can easily observe that the map $\varphi_i:\mathcal{O}_X(m)|_{U_i} \longrightarrow \mathcal{O}_X|_{U_i}$ induced by $\psi_i:\mathcal{O}_X(U_i)\longrightarrow \mathcal{O}_X(m)(U_i)$, $a \mapsto ax_i^m$ is an isomorphism. Then, the transition map $f_{ij}$ as in Lemma \ref{picke} is given by $\frac{x_i^m}{x_j^m}$ and these $f_{ij}$ determine an element $f \in Z^1(\mathcal{U},\mathcal{O}^*_X)$. It follows from Lemmas \ref{canonicalf} and \ref{doesnotchoicee_i} that this, in turn, determines and element of \v{H}$^1(\mathcal{U},\mathcal{O}^*_X)$. On the other hand, the isomorphism \eqref{cechprojectivespaceeqn} sends the equivalence class $[f=(\frac{x_i^m}{x_j^m})]$ to $m$.
\end{proof}

\begin{cor} \label{invsheafclass}
Let $X=\mathbb{P}^n_M$ be a projective space over a totally ordered idempotent semifield $M$. Then
\begin{enumerate}
\item
$\mathcal{O}_X(l)\otimes_{\mathcal{O}_X}\mathcal{O}_X(m)\simeq \mathcal{O}_X(l+m)$  $\forall l,m \in \mathbb{Z}$.
\item
Any invertible sheaf $\mathcal{L}$ on $X$ is isomorphic to $\mathcal{O}_X(m)$ for some $m \in \mathbb{Z}$.
\end{enumerate}
\end{cor}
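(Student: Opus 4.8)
The plan is to deduce both statements purely formally from the isomorphisms already established, without any new geometric computation. The key object is the composite isomorphism
\[
\Psi : \Pic(X) \xrightarrow{\ \phi\ } \textnormal{\v{H}}^1(X,\mathcal{O}_X^*) \xrightarrow{\ \sim\ } \mathbb{Z},
\]
where $\phi$ is the map \eqref{picisotoH1} of Theorem \ref{cech and picard equal} and the second arrow is the isomorphism \eqref{cechprojectivespaceeqn} of Proposition \ref{cechprojectivespace}. I would first record that $\Psi$ is a group isomorphism carrying the tensor operation on $\Pic(X)$ to addition on $\mathbb{Z}$: indeed $\phi$ is a group homomorphism (established in the lemma preceding Theorem \ref{cech and picard equal}), and the isomorphism $\textnormal{\v{H}}^1(\mathcal{U},\mathcal{O}_X^*)\simeq\mathbb{Z}$ of Proposition \ref{cechprojectivespace} sends the class of a cocycle $(q_{ij}x_j^{n_{ij}}x_i^{-n_{ij}})$ to the common exponent $n_{ij}$, so that multiplying cocycles adds exponents. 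By Lemma \ref{twisting} we moreover have the explicit value $\Psi([\mathcal{O}_X(m)]) = m$ for every $m \in \mathbb{Z}$.

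For part (1), I would simply apply $\Psi$ to the class $[\mathcal{O}_X(l)\otimes_{\mathcal{O}_X}\mathcal{O}_X(m)]$. Since $\Psi$ is a homomorphism from $(\Pic(X),\otimes_{\mathcal{O}_X})$ to $(\mathbb{Z},+)$ and $\Psi([\mathcal{O}_X(l)])=l$, $\Psi([\mathcal{O}_X(m)])=m$ by Lemma \ref{twisting}, we obtain
\[
\Psi\bigl([\mathcal{O}_X(l)\otimes_{\mathcal{O}_X}\mathcal{O}_X(m)]\bigr) = l+m = \Psi\bigl([\mathcal{O}_X(l+m)]\bigr).
\]
As $\Psi$ is injective, the two classes coincide, yielding $\mathcal{O}_X(l)\otimes_{\mathcal{O}_X}\mathcal{O}_X(m)\simeq \mathcal{O}_X(l+m)$. (Alternatively, one may argue directly on \v{C}ech data: the transition cocycle of $\mathcal{O}_X(m)$ computed in Lemma \ref{twisting} is $(x_i^m/x_j^m)$, the lemma showing $\phi$ is a homomorphism multiplies transition cocycles under $\otimes_{\mathcal{O}_X}$, and $(x_i^l/x_j^l)(x_i^m/x_j^m)=(x_i^{l+m}/x_j^{l+m})$ is exactly the cocycle of $\mathcal{O}_X(l+m)$.)

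For part (2), given an arbitrary invertible sheaf $\mathcal{L}$ on $X$, I would set $m := \Psi([\mathcal{L}]) \in \mathbb{Z}$. Then $\Psi([\mathcal{L}]) = m = \Psi([\mathcal{O}_X(m)])$ by Lemma \ref{twisting}, and injectivity of $\Psi$ forces $[\mathcal{L}] = [\mathcal{O}_X(m)]$ in $\Pic(X)$; that is, $\mathcal{L}\simeq \mathcal{O}_X(m)$. There is no real obstacle in this argument, since all the substantive work—identifying $\textnormal{\v{H}}^1(X,\mathcal{O}_X^*)$ with $\mathbb{Z}$, verifying that $\phi$ is a group homomorphism, and computing the image of each $\mathcal{O}_X(m)$—has already been carried out. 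The only point deserving care is the bookkeeping in the first paragraph: one must confirm that the chain of isomorphisms is compatible with the respective group laws (tensor product, multiplication of \v{C}ech cocycles, and addition in $\mathbb{Z}$) so that $\Psi$ is genuinely an isomorphism of groups, after which parts (1) and (2) are immediate consequences of the bijectivity of $\Psi$ together with the normalization $\Psi([\mathcal{O}_X(m)])=m$.
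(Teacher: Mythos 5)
Your proposal is correct and follows essentially the same route as the paper: the paper's own proof is a one-line appeal to Lemma \ref{twisting}, i.e.\ to the fact that the composite isomorphism $\Pic(X)\simeq\textnormal{\v{H}}^1(X,\mathcal{O}_X^*)\simeq\mathbb{Z}$ sends $[\mathcal{O}_X(m)]$ to $m$, from which both claims follow by the group-isomorphism property exactly as you argue. You have merely spelled out the bookkeeping (homomorphism property, injectivity, and the normalization $\Psi([\mathcal{O}_X(m)])=m$) that the paper leaves implicit.
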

\begin{proof}
This is clear from Lemma \ref{twisting} since the equivalence class of $\mathcal{O}_X(1)$ in $\Pic(X)$ maps to $1 \in \mathbb{Z}$ under the isomorphism. 
\end{proof}

\appendix 
\section{Basic definitions of semirings}\label{semirings}
In this section, we provide the basic definitions of semirings which are frequently used in the paper. 
\begin{mydef}
A set $T$ equipped with a binary operation $\cdot$ is called a monoid if for $a,b,c \in T$, we have $(a\cdot b)\cdot c=a\cdot(b \cdot c)$ and there exists $1 \in T$ such that $1\cdot a=a\cdot 1=a$. When $a\cdot b=b\cdot a$ $\forall a,b \in T$, we say that $T$ is a commutative monoid. When $T$ does not have $1$, $T$ is called a semigroup.
\end{mydef}
\begin{mydef}
A semiring $(M,+,\cdot)$ is a non-empty set $M$ endowed with an addition $+$ and a multiplication $\cdot$ such that
\begin{enumerate}
\item
$(M,+)$ is a commutative monoid with the neutral element $0$.
\item
$(M,\cdot)$ is a monoid with the identity $1$.
\item
$r(s+t)=rs+rt \textrm{ and } (s+t)r=sr+tr \quad \forall r,s,t \in M.$
\item
$r\cdot 0=0\cdot r=0\quad \forall r \in M.$
\item
$0 \neq 1$.
\end{enumerate}
If $(M,\cdot)$ is a commutative monoid, then we call $M$ a commutative semiring. If $(M \backslash \{0\}, \cdot)$ is a group, then a semiring $M$ is called a semifield.
\end{mydef}

\begin{mydef}(cf. \cite{semibook})
Let $M_1$, $M_2$ be semirings. A map $f:M_1 \longrightarrow M_2$ is a homomorphism of semirings if $f$ satisfies the following conditions:
\[f(a+b)=f(a)+f(b), \quad f(ab)=f(a)f(b), \quad f(0)=0, \quad f(1)=1 \quad \forall a,b \in M_1.\] 
\end{mydef}

\begin{mydef}
Let $M$ be a commutative semiring and $T$ be a commutative monoid. We say that $T$ is a $M$-semimodule if there exists a map $\varphi:M\times T \longrightarrow T$ which satisfies the following properties: $\forall m,m_1,m_2 \in M$, $\forall t,t_1,t_2 \in T$,
\begin{enumerate}
\item
$\varphi(1,t)=t$.
\item
If $t=0$ or $m=0$, then $\varphi(m,t)=0$.
\item
$\varphi(m_1+m_2,t) = \varphi(m_1,t)+\varphi(m_2,t),\quad$ $\varphi(m,t_1+t_2) = \varphi(m,t_1)+\varphi(m,t_2)$.
\item
$\varphi(m_1m_2,t)=\varphi(m_1,\varphi(m_2,t)),\quad$ $\varphi(m,t_1t_2)=\varphi(m,t_2)t_2.$
\end{enumerate}
\end{mydef}
 
\noindent By an idempotent semiring, we mean a semiring $M$ such that $x+x=x$ $\forall x \in M$.
\begin{myeg} 
Let $\mathbb{B}:=\{0,1\}$. We define an addition as: $1+1=1$, $1+0=0+1=1$, and $0+0=0$. A multiplication is defined by $1\cdot 1=1$, $1\cdot 0=0$, and $0\cdot 0=0$. Then, $\mathbb{B}$ becomes the initial object in the category of idempotent semirings. 
\end{myeg}
\begin{myeg}
The tropical semifield $\mathbb{R}_{max}$ is $\mathbb{R}\cup \{-\infty\}$ as a set. An addition $\oplus$ is given by: $a\oplus b :=\max\{a,b\} \quad \forall a,b \in \mathbb{R}_{max}$, where $-\infty \leq a$ $\forall a \in \mathbb{R}_{max}$. A multiplication $\odot$ is defined as the usual addition of $\mathbb{R}$ as follows: $a \odot b:=a+b$ $\forall a,b \in \mathbb{R}$ and $(-\infty)\odot a=a\odot (-\infty)=(-\infty)$ $\forall a \in \mathbb{R}_{max}$. We denote by $\mathbb{Q}_{max}$, $\mathbb{Z}_{max}$ the sub-semifields of $\mathbb{R}_{max}$ with the underlying sets $\mathbb{Q}\cup \{-\infty\}$, $\mathbb{Z}\cup \{-\infty\}$ respectively.
\end{myeg}

\bibliography{Cech_Cohomology_Semiring_Scheme_Revision}\bibliographystyle{plain}

\begin{thebibliography}{10}

\bibitem{giansiracusa2013equations}
Jeffrey Giansiracusa and Noah Giansiracusa.
\newblock Equations of tropical varieties.
\newblock {\em arXiv preprint arXiv:1308.0042}, 2013.

\bibitem{giansiracusa2014universal}
Jeffrey Giansiracusa and Noah Giansiracusa.
\newblock The universal tropicalization and the berkovich analytification.
\newblock {\em arXiv preprint arXiv:1410.4348}, 2014.

\bibitem{semibook}
JS~Golan.
\newblock Semirings and their applications, updated and expanded version of the
  theory of semirings, with applications to mathematics and theoretical
  computer science, 1999.

\bibitem{Har}
Robin Hartshorne.
\newblock {\em Algebraic geometry}.
\newblock Number~52. Springer, 1977.

\bibitem{joo2014prime}
D{\'a}niel Jo{\'o} and Kalina Mincheva.
\newblock Prime congruences of idempotent semirings and a nullstellensatz for
  tropical polynomials.
\newblock {\em arXiv preprint arXiv:1408.3817}, 2014.

\bibitem{jaiungthesis}
Jaiung Jun.
\newblock {\em Algebraic geometry over semi-structures and hyper-structures of
  characteristic one}.
\newblock PhD thesis, Johns Hopkins University, Baltimore, Maryland, USA, May
  2015.

\bibitem{jun2015valuations}
Jaiung Jun.
\newblock Valuations of semirings.
\newblock {\em arXiv preprint arXiv:1503.01392}, 2015.

\bibitem{les1}
Paul Lescot.
\newblock Alg\`{e}bre absolue.
\newblock {\em Ann. Sci. Math. Québec 33, no 1, 63-82}, 2009.

\bibitem{les2}
Paul Lescot.
\newblock Absolute algebra {II} ideals and spectra.
\newblock {\em Journal of Pure and Applied Algebra}, 215(7):1782--1790, 2011.

\bibitem{les3}
Paul Lescot.
\newblock Absolute algebra {III} the saturated spectrum.
\newblock {\em Journal of Pure and Applied Algebra}, 216(5):1004--1015, 2012.

\bibitem{liu2002algebraic}
Qing Liu and Reinie Erne.
\newblock {\em Algebraic geometry and arithmetic curves}.
\newblock Oxford university press, 2002.

\bibitem{oliver3}
Oliver Lorscheid.
\newblock Calculation of sheaf cohomology over $\mathbb{P}^1_{{\mathbb{F}_1}}$
  by an injective resolution.

\bibitem{oliver1}
Oliver Lorscheid.
\newblock The geometry of blueprints: Part{ I}: Algebraic background and scheme
  theory.
\newblock {\em Advances in Mathematics}, 229(3):1804--1846, 2012.

\bibitem{remarksonsemimodules}
Bodo Pareigis and Helmut Rohrl.
\newblock Remarks on semimodules.
\newblock {\em arXiv preprint arXiv:1305.5531}, 2013.

\bibitem{cech}
Alex Patchkoria.
\newblock On exactness of long sequences of homology semimodules.
\newblock {\em Journal of Homotopy and Related Structures}, 1(1):229--243,
  2006.

\bibitem{serre1955faisceaux}
Jean-Pierre Serre.
\newblock Faisceaux alg{\'e}briques coh{\'e}rents.
\newblock {\em Annals of Mathematics}, pages 197--278, 1955.

\bibitem{toen2009dessous}
Bertrand To{\"e}n and Michel Vaqui{\'e}.
\newblock Au-dessous de {S}pec$\mathbb{Z}$.
\newblock {\em Journal of K-theory: K-theory and its Applications to Algebra,
  Geometry, and Topology}, 3(03):437--500, 2009.

\end{thebibliography}
\end{document}